\documentclass[12pt]{amsart}
\usepackage{amsmath,amssymb,amsfonts}
\usepackage{tikz-cd}
\usetikzlibrary{matrix,arrows}

\usepackage[colorlinks=true,linkcolor=red,citecolor=blue]{hyperref}
\newtheorem{theorem}{Theorem}[section]
\newtheorem{proposition}{Proposition}[section]
\newtheorem{corollary}{Corollary}[section]

\newtheorem{remark}{Remark}[section]

\newtheorem{definition}{Definition}[section]

\newcommand\myenum[1]

\begin{document}
	\title{Some properties of uaw-Dunford-Pettis operators}
	\author{S. Boumnidel}
	\address{Sanaa Boumnidel, Department of Mathematics, Abdelmalek Essaadi University,
		P.O. Box 745, Larache 92004, Morocco}
	\email{sboumnidel19@gmail.com}
    \address{Noufissa Hafidi,  Department of Mathematics, Moulay Ismail University, School of Sciences,  P.O. BOX 11201,
Zitoune, Meknes, Morocco}
\email{hafidinoufissa@gmail.com}	
		\author{N. Hafidi}
	\begin{abstract}
		
		This paper introduces and investigates novel properties of uaw-Dunford-Pettis operators on Banach spaces, exploring their relationships with other classes of operators. We further define and characterize new property of Banach lattices. Also we introduce and study the weak$^{\star }$ version of uaw-Dunford-Pettis operator, highlighting their relationships with other classes of operators.

	\end{abstract}
	
	\keywords{uaw-Dunford-Pettis operator; disjoint weak$^{\star }$ Dunford-Pettis
		operator; disjoint weak$^{\star }$ Dunford-Pettis property; limited set; un-compact; L-weakly compact operators}
	\subjclass[2000]{{46A40, 46B40, 46B42.}}
	\maketitle
	
	\section{Introduction}
	
	The notion of unbounded absolutely weakly convergence was introduced by O. Zabeti in \cite{article.7}. This is the weak version of unbounded norm convergence which was first introduced by V. Troitsky in \cite{article.8}.

	The studies on those recent convergences play a pivotal role in the study of operator theory and Banach lattices particularly functional spaces. Its applications extend to many fields like measure theory and probability, also, it can indirectly support applied fields like optimization, and mathematical physics.

	The class of uaw-Dunford-Pettis was defined in \cite{article.6}. This article introduces and explores previously unstudied properties and provide new useful characterizations of uaw-Dunford-Pettis operator.  Highlighting its relationship with other classes of operators. Furthermore, Throughout this work we introduce and investigate the weak* version of this operator, that  we called the disjoint weak$^{\star}$ Dunford-Pettis operators. We provide several results about this new class of operators, and we  establish new characterization of Banach Lattices.
	
	In addition, a new property of Banach lattices is introduced  and studied, inspired by the definition of uaw-Dunford-Pettis. Furthermore, we study its relationship with other Banach lattices properties.

	Some other interesting results are also shown.

	\section{Preliminaries}

The unbounded absolute weak convergence (for short, uaw-convergence) was first introduced by Zabeti \cite{article.7}. A sequence $(x_{n})$ is unbounded absolutely weakly convergent (uaw-convergent, for short) to a vector $x$ in $E$ iff $(|x_{n}-x| \land u)$ is weakly convergent to zero for every $u\in E^+$; we write $x_{\alpha}\overset{uaw}{\longrightarrow} x$.\\
The unbounded norm convergence (un-convergence, for short) was introduced by V. Troitsky \cite{article.4}. A sequence is said to be unbounded norm convergent (un-convergence, shortly) to a vector $x$ in $E$ iff $\big|\big| |x_{n}-x| \land u\big|\big|$ converges to zero for every $u\in E^+$ ; we write $x_{n}\overset{un}{\longrightarrow} x$.

	A norm bounded subset $A$ of $E$ is said to be:
	\begin{itemize}
		\item Relatively sequentially unbounded norm compact (relatively $\sigma$-un-compact, shortly) if every sequence in $A$ has a un-convergent subsequence, see \cite{article.4}.
		\item  Relatively sequentially unbounded absolutely weakly compact (relatively $\sigma$-uaw-compact, shortly) if every sequence in $A$ has a uaw-convergent subsequence, see \cite{article.6}.
	\end{itemize}
	Recall that an operator $T:E\longrightarrow F$ is said to be:
	\begin{itemize}
    \item Unbounded absolutely weak Dunford-Pettis operator, (uaw-Dunford-Pettis, shortly), if for every norm
bounded sequence $(x_n)$ in $E$, $x_{n}\overset{uaw}{\longrightarrow}0$ implies  $\|T(x_n)\|\rightarrow 0$, see \cite{article.6}.
		
		\item Sequentially unbounded absolutely weakly compact ($\sigma$-uaw-compact, shortly) if $T(B_{E})$ is $\sigma$-uaw-compact in $F$. Equivalently, for every norm bounded sequence $(x_{n})$ in $E$ its image has a subsequence which is uaw-convergent, see \cite{article.6}.
		\item Sequentially unbounded norm-compact ($\sigma$-un-compact, shortly) if $T(B_{E})$ is $\sigma$-un-compact in $E$. Equivalently, for every norm bounded sequence $(x_{n})$ in $E$ its image has a subsequence, which is un-convergent, see \cite{article.4}.

	\item	Unbounded continuous if for
	each bounded uaw-null sequence $(x_{n})$ in $E$, $(T(x_{n}))$ is weakly null, see \cite{article.usp}
	
		\item $\sigma$-un-continuous, for each norm bounded sequence $(x_{n})$ in $E$, if 
		$x_{n} \overset{un}{\longrightarrow}0$ implies $T(x_{n}) \overset{un}{\longrightarrow}0$, see \cite{article.usp}.
			\item $\sigma$-uaw-continuous for each norm bounded sequence $(x_{n})$ in $E$, if 
			$x_{n} \overset{uaw}{\longrightarrow}0$ implies $T(x_{n}) \overset{uaw}{\longrightarrow}0$, see \cite{article.usp}.
	\end{itemize}
Let also recall that:
	\begin{itemize}
	\item	A Banach space $X$ has the Dunford-Pettis$^{\star}$ property (DP$^{\star}$ property for short),
	if $x_{n} \overset{w}{\longrightarrow} 0$ in $X$ and 
	for every w$^{\star}$-null sequence $(f_{n})$ in $E^{\prime}$, $f_{n}(x_{n}){\longrightarrow} 0$.
	
	\item A Banach lattice $E$ has the uaw-w$^{\star}$DP property, if $\lim\limits_{n\rightarrow \infty} f_{n}(x_{n})= 0$,  for every uaw-null norm bounded sequence $(x_{n})$ in $E$ and every weak$^{\star}$ null sequence $(f_{n})$ in $E^{\prime}$ \cite{article.FF}.
	
		\end{itemize}

	During this article, the following fact will be used:
		Let $(X, ||.||)$ be a normed space. Then $x_{n}\overset{||.||}{\longrightarrow}0$ iff for any subsequence $(x_{n_{k}})$ there is a further subsequence, which we shall denote by $(x_{n_{k}})$ again, such that $(x_{n_{k}})\overset{||.||}{\longrightarrow}0$.
        
		We will use the term operator $T:E\longrightarrow F$ between two Banach lattices to mean a bounded linear mapping.	
		
	For all unexplained terminology and standard facts on vector and Banach lattices, we refer the reader to the monographs \cite{book.1} and \cite{book.2}.
	\section{Main Results}
	
In [\cite{article.6}, Proposition 2.32], the following result was proved  	
	\begin{proposition}
		Let $E$ and $F$ be two Banach lattices.	
		
		If $T:E\longrightarrow F$ is uaw-Dunford-Pettis and $S:G\longrightarrow E$ is $\sigma$-uaw-compact then  $T\circ S$ is compact.

	\end{proposition}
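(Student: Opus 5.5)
We use the sequential characterization of compactness: it suffices to show that for every norm bounded sequence $(z_n)$ in $G$, the sequence $(T S z_n)$ has a norm convergent subsequence in $F$. The plan is to obtain a uaw-convergent subsequence of $(S z_n)$ from the $\sigma$-uaw-compactness of $S$, turn it into a uaw-null difference, and then apply the uaw-Dunford-Pettis property of $T$.

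Fix a bounded sequence $(z_n)$ in $G$. Since $S$ is $\sigma$-uaw-compact, $(S z_n)$ is a norm bounded sequence in $E$ and has a subsequence $(S z_{n_k})$ with $S z_{n_k}\overset{uaw}{\longrightarrow} x$ for some $x\in E$. We would like to conclude $\|T S z_{n_k} - T x\| \to 0$. The sequence $(S z_{n_k}-x)$ is norm bounded and uaw-null by definition, since $|S z_{n_k}-x|\land u \overset{w}{\longrightarrow} 0$ for every $u\in E^+$. Hence, because $T$ is uaw-Dunford-Pettis, $\|T(S z_{n_k}-x)\|\to 0$, that is, $T S z_{n_k}\to Tx$ in norm. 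So $T\circ S$ maps bounded sequences to sequences with norm convergent subsequences, and $T(S(B_G))$ is relatively compact. Thus $T\circ S$ is compact.

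The step needing the most care is the first one: the definition of $\sigma$-uaw-compact only says that the subsequence is uaw-convergent, and one has to make sure that the limit $x$ actually lies in $E$. This holds with the definition as stated in the Preliminaries, which takes the limit in $E$. If instead one only had a uaw-Cauchy subsequence, we would argue with differences. Boundedness of $(S z_{n_k})$ comes from boundedness of $S$ and of $(z_n)$, and the difference $S z_{n_k}-x$ stays bounded because it differs from $S z_{n_k}$ by a single vector.
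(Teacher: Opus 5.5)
Your proof is correct: extracting $Sz_{n_k}\overset{uaw}{\longrightarrow} x$ with $x\in E$ and applying the uaw-Dunford-Pettis property of $T$ to the norm bounded uaw-null sequence $(Sz_{n_k}-x)$ gives $TSz_{n_k}\to Tx$ in norm, which is the standard subsequence argument for this statement. The paper gives no proof of its own and quotes the result from \cite{article.6} (Proposition 2.32), where as far as I recall it is proved by this direct argument; your aside about uaw-Cauchy subsequences is unnecessary, since the definition already gives a limit in $E$.
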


	\begin{remark}\label{nn}
			The converse is not true in general, i.e. If $S$ is a $\sigma$-uaw-compact operator from an arbitrary Banach lattice $G$ to $E$ and $T$ is an operator from $E$ into $F$. The composition $T\circ S$ is compact does not ensure that $T$ is uaw-Dunford-Pettis. Indeed, consider the following: $E=\ell^{\infty}$, $ F= \ell^{\infty}$, $G=\ell^{1}$ and $T = Id_{\ell^{\infty}}$ and the operator $S: \ell^{1} \enskip \longrightarrow  \ell^{\infty}$  defined by 
		
		\begin{center}	
			$S( \alpha_{n} )_{n} \enskip =(\sum_{n=1}^{\infty} \alpha_{n}, \sum_{n=1}^{\infty} \alpha_{n},........)$
		\end{center}
			$S$ is compact  (rank-one oerator). Hence, $Id_{\ell^{\infty}} \circ S= S$ is compact. However, $Id_{\ell^{\infty}}$ is not uaw-Dunford-Pettis. Indeed, consider the unit basis $(e_{n})$ in $\ell^{\infty}$, $e_{n}\overset{||.||}\nrightarrow0$.

	\end{remark}	
	
 By studying the converse, we give the following important characterization:	
	\begin{theorem}\label{Ti2}
		Let $E$ be an atomic KB-space Banach, $Y$ be a Banach space and $T:E\longrightarrow Y$ be an unbounded continuous operator.

		Then the following assertions are equivalent:
		\begin{enumerate}
			\item $T$ is uaw-Dunford-Pettis.
			\item $T$ carries relatively $\sigma$-uaw-compact subsets of $E$ into norm totally bounded subsets of $F$.
			\item For an arbitrary operator $S$ from a Banach lattice $G$ into $E$, $T\circ S$ is compact.
			\item For an arbitrary operator $S$ from $\ell^{1}$ into $E$, $T\circ S$ is compact.
		\end{enumerate}
	\end{theorem}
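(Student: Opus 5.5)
The plan is to use one structural fact about atomic KB-spaces. A Banach lattice is an atomic KB-space exactly when its closed unit ball $B_E$ is sequentially un-compact (Kandi\'c--Marabeh--Troitsky). Hence every norm bounded sequence $(x_n)$ in $E$ has a subsequence $(x_{n_k})$ and some $x\in E$ with $x_{n_k}\overset{un}{\longrightarrow}x$. Since $\big||x_{n_k}-x|\wedge u\big|\le \big\||x_{n_k}-x|\wedge u\big\|$ in norm and norm convergence implies weak convergence, this gives $x_{n_k}\overset{uaw}{\longrightarrow}x$. Moreover, in a KB-space an un-limit of a bounded sequence satisfies $\|x\|\le\liminf\|x_{n_k}\|$ (Fatou-type property), so $x_{n_k}-x$ is again norm bounded. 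Consequently every operator $S:G\to E$ is $\sigma$-uaw-compact, and every bounded subset of $E$ is relatively $\sigma$-uaw-compact. I will quote this fact and then go around the cycle (1)$\Rightarrow$(2)$\Rightarrow$(3)$\Rightarrow$(4)$\Rightarrow$(1).

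\textbf{(1)$\Rightarrow$(2).} Let $A$ be relatively $\sigma$-uaw-compact and take a sequence $(x_n)$ in $A$. Pass to a subsequence with $x_{n_k}\overset{uaw}{\longrightarrow}x$, where $x\in E$ and $(x_{n_k}-x)$ is norm bounded; the boundedness of the difference is exactly where the KB/Fatou property is used. Since $x_{n_k}-x$ is a bounded uaw-null sequence, (1) gives $\|Tx_{n_k}-Tx\|\to0$. Thus every sequence in $T(A)$ has a norm convergent subsequence, so $T(A)$ is relatively compact and in particular norm totally bounded.

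\textbf{(2)$\Rightarrow$(3).} For any operator $S:G\to E$, the set $S(B_G)$ is norm bounded in $E$, hence relatively $\sigma$-uaw-compact by the fact above. By (2), $T(S(B_G))$ is totally bounded, so $T\circ S$ is compact. The alternative route is the proposition quoted from \cite{article.6}, but that needs (1) rather than (2). \textbf{(3)$\Rightarrow$(4)} is immediate by taking $G=\ell^1$.

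\textbf{(4)$\Rightarrow$(1).} This is the step where the unbounded continuity of $T$ is needed. Let $(x_n)$ be a norm bounded uaw-null sequence and suppose $\|Tx_n\|\not\to0$. Passing to a subsequence, we may assume $\|Tx_n\|\ge\varepsilon>0$ for all $n$. Define
\[
S:\ell^1\to E,\qquad S(\alpha_n)_n=\sum_{n=1}^\infty\alpha_n x_n ,
\]
which is bounded with $\|S\|\le\sup_n\|x_n\|$ and satisfies $Se_n=x_n$. By (4), $T\circ S$ is compact, so $(Tx_n)=(TSe_n)$ has a subsequence converging in norm to some $y$. Since $T$ is unbounded continuous, $(Tx_n)$ is weakly null, which forces $y=0$. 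This contradicts $\|Tx_{n_k}\|\ge\varepsilon$, so $\|Tx_n\|\to0$ and $T$ is uaw-Dunford-Pettis.

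The main obstacle is the first paragraph. It requires the characterization of atomic KB-spaces through sequential un-compactness of $B_E$, including that the limit lies in $E$ and that $(x_{n_k}-x)$ stays bounded. Everything after that is routine.
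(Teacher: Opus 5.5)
Your proof is correct and follows the paper's route: the same cycle of implications, the same appeal to Kandi\'c--Marabeh--Troitsky (Theorem 7.5) for atomic KB-spaces, and the same operator $S:\ell^1\to E$, with unbounded continuity forcing the norm limit of $(Tx_{n_k})$ to be $0$. You are more careful than the paper in using the atomic KB hypothesis in (2)$\Rightarrow$(3), where it is genuinely needed; the paper calls that step obvious and invokes the hypothesis in (4)$\Rightarrow$(1), where it plays no role. One small correction: the boundedness of $(x_{n_k}-x)$ in (1)$\Rightarrow$(2) does not use any Fatou or KB property, since $x\in E$ is a fixed vector and $\|x_{n_k}-x\|\le\|x_{n_k}\|+\|x\|$.
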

	\begin{proof}
		$(1)\Longrightarrow (2)$ Let $W$ be a $\sigma$-uaw-compact subset of $E$. If
		$(x_{n})$ is a sequence of $W$, then there exists a subsequence $(y_{n})$ of $(x_{n})$ satisfying $ y_{n}\overset{uaw}{\longrightarrow}y $, in $Y$ and so $T(y_{n}) - T(y) \overset{||.||}{\longrightarrow} 0$, which proves that
		$T(W)$ is a norm totally bounded subset.	
		
		$(2)\Longrightarrow (3)$ and $(3)\Longrightarrow (4)$ are obvious.

		$(4)\Longrightarrow (1)$ Let $x_{n}\overset{uaw}{\longrightarrow}0$, and define the operator $S: \ell^{1} \enskip \longrightarrow  E$  defined by  \begin{center}
			$S((\lambda_{k})_{k=1}^{\infty })=\sum\limits_{k=1}^{\infty }\lambda_{k}x_{k}$ \hspace{1cm} for each $(\lambda_{k})_{k=1}^{\infty }\in \ell^{1}$.
		\end{center}
		Since $E$ is an atomic KB-space then, it follows from [\cite{article.4}, Theorem 7.5] that there exists a subsequence $(x_{n_{k}})$ of $(x_{n})$  such that $x_{n_{k}}\overset{un}{\longrightarrow} y$ for some $y$ in $E$, hence $x_{n_{k}}\overset{uaw}{\longrightarrow} y$. Consequently, the identity operator $Id_{E}$ is $\sigma$-uaw-compact. By virtue of [\cite{article.6}, Proposition 2.32], $Id_{E}\circ S = S$ is $\sigma$-uaw-compact.

		Now, according to our hypothesis $T\circ S$ is compact. Let $(e_{n})$ denotes the sequence of basic vectors of $\ell^{1}$. On one hand, $T\circ S(e_{n})=T(x_{n})\overset{w}{\longrightarrow}0$ holds because $T$ is unbounded continuous. On the other hand, since
		$T\circ S$ is compact, we see that every subsequence of $(T\circ S(e_{n}))$ has a subsequence
		converging in norm to zero. Therefore, $T(x_{n})\overset{||.||}{\longrightarrow}0$ holds, and this proves that $T$ is a uaw-Dunford-Pettis operator.
	\end{proof}

	\begin{remark}
		\begin{enumerate}	
			
			\item In Theorem \ref{Ti2} the condition of unbounded continuity of $T$ can not be removed, let consider a Banach lattice $F\neq \{0\}$ and an element $y$ in $F$ with $y\neq 0$, consider also $S=  Id_{\ell^{1}}$ and  $T: \ell^{1} \enskip \longrightarrow  F$  defined  as below, 
			
			\begin{center}
				$T((\lambda_{k})_{k=1}^{\infty })=\sum\limits_{k=1}^{\infty }\lambda_{k}y$ \hspace{1cm} for each $(\lambda_{k})_{k=1}^{\infty }\in \ell^{1}$.
			\end{center}
			By [\cite{article.4}, Theorem 7.5] $S$ is $\sigma$-uaw-compact, but $T$ is not unbounded continuous. Indeed let consider the unit basis sequence $(e_{n})$ of $\ell^{1}$, we know that  $e_{n}\overset{uaw}{\longrightarrow}0$, however $T(e_{n})= y\overset{w}{\nrightarrow}0$.

			Finally; $T \circ Id_{\ell^{1}}= T$ is compact but $T$ is not uaw-Dunford-Pettis. 
			
			\item In Theorem \ref{Ti2}, the condition $E$ is KB-space can not be dropped, indeed, let consider the example in Remark \ref{nn}, $E=\ell^{\infty}$, $\ell^{\infty}$ is not KB-space and $Id_{\ell^{\infty}}$ is unbounded continuous (by [\cite{article.7}, Theorem 7]), but $Id_{\ell^{\infty}}$ is not uaw-Dunford-Pettis.

		\end{enumerate}			
	\end{remark}

We have the following result:

		\begin{theorem}\label{t2}
		Let $E$ be a Banach lattice and $Y$ be a Banach space. If each weak Dunford-Pettis operator $T:E\rightarrow Y$ is uaw-Dunford-Pettis, then one of the following assertion is valid:
		\begin{enumerate}
			\item The norm of $E^{\prime}$ is order continuous,
			\item $Y$ has the Schur property.
		\end{enumerate}
	\end{theorem}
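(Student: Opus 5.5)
The plan is to argue by contraposition. Assume that the norm of $E^{\prime}$ is not order continuous and that $Y$ does not have the Schur property. I will then build a weak Dunford-Pettis operator $T:E\rightarrow Y$ that is not uaw-Dunford-Pettis, contradicting the hypothesis.

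\textbf{Step 1 (structure of $E$).} Since the norm of $E^{\prime}$ is not order continuous, a classical result (Meyer-Nieberg, Theorem 2.4.14 / Proposition 2.3.11 in \cite{book.2}) gives the following. There is a disjoint sequence $(u_n)\subset E^{+}$ with $\|u_n\|=1$ spanning a sublattice lattice-isomorphic to $\ell^{1}$. Moreover, there is a positive projection $P:E\rightarrow \ell^{1}$, i.e.\ an operator with $P(u_n)=e_n$, the unit vectors of $\ell^{1}$. This step is the main obstacle: one must quote the correct form, namely the existence of a \emph{complemented} lattice copy of $\ell^{1}$ whose projection sends $u_n$ exactly to $e_n$. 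If only a positive operator $P$ with $P(u_n)=e_n$ is available, that is enough for what follows.

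\textbf{Step 2 (structure of $Y$).} Since $Y$ is not Schur, there is a weakly null sequence $(y_n)\subset Y$ with $\|y_n\|=1$. Define $S:\ell^{1}\rightarrow Y$ by $S((\lambda_k)_k)=\sum_k \lambda_k y_k$. This is bounded, with $\|S\|\le 1$. Put $T=S\circ P:E\rightarrow Y$.

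\textbf{Step 3 ($T$ is weak Dunford-Pettis).} Let $x_n\overset{w}{\longrightarrow}0$ in $E$. Then $P(x_n)\overset{w}{\longrightarrow}0$ in $\ell^{1}$, hence $\|P(x_n)\|\rightarrow 0$ because $\ell^{1}$ has the Schur property. It follows that $\|T(x_n)\|\rightarrow 0$. So $T$ is Dunford-Pettis, and in particular weak Dunford-Pettis: $f_n(T x_n)\rightarrow 0$ for every weakly null $(f_n)\subset Y^{\prime}$, since $(f_n)$ is bounded. By hypothesis, $T$ is therefore uaw-Dunford-Pettis.

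\textbf{Step 4 (contradiction).} The sequence $(u_n)$ is norm bounded and disjoint. By Zabeti \cite{article.7}, every disjoint sequence is uaw-null, so $u_n\overset{uaw}{\longrightarrow}0$. On the other hand, $T(u_n)=S(e_n)=y_n$ has norm $1$ for every $n$. Hence $T$ is not uaw-Dunford-Pettis, which contradicts Step 3. Therefore either the norm of $E^{\prime}$ is order continuous or $Y$ has the Schur property.
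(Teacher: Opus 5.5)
Your proof is correct and follows the paper's argument: a positively complemented lattice copy of $\ell^{1}$ in $E$ (Meyer-Nieberg 2.4.14/2.3.11), the operator $T=S\circ P$ with $S(\lambda)=\sum_k\lambda_k y_k$, and an evaluation of $T$ on the disjoint, hence uaw-null, copies of the unit vectors. The differences are minor: you argue by contraposition, you get that $T$ is even Dunford-Pettis from the Schur property of $\ell^{1}$ where the paper cites the Dunford-Pettis property of $\ell^{1}$, and you apply uaw-nullity to the disjoint sequence $(u_n)$ in $E$, which is slightly more careful than the paper's appeal to $(e_n)$.
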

	\begin{proof}

		It suffices to establish that if the norm of $E^{\prime}$ is not order continuous, then $Y$ has the Schur property.

		Since the norm of $E^{\prime }$ is not order continuous, by  [\cite{book.2}, Theorem 2.4.14] we may assume that $\ell^{1}$ is a closed sublattice of $E$, then it follows from  [\cite{book.2}, Proposition 2.3.11] that there is a positive projection $P$ from $E$ onto $\ell^{1}$.

		We will prove that $Y$ has the Schur property. To this end, let $(y_{n})$ be a weakly null sequence in $Y$ and consider the operator $T=S\circ P:E\longrightarrow \ell^{1}\longrightarrow Y$, where $S$ is the operator defined by

		$$    
		\begin{array}{lrcl} 
		S : & \ell^1 & \longrightarrow &  Y \\
		& (\alpha _{n}) & \longmapsto & \ \displaystyle\sum_{n=1}^{\infty}\alpha_{n}y_{n}  \end{array}  
		$$
		
		 $T$ is a weak Dunford-Pettis  operator (because $\ell^{1}$ has the Dunford-Pettis property), and by hypothesis $T$ is uaw-Dunford-Pettis. Now, let $(e_{n})$ be the standard basis of $\ell^{1}$, we have $e_{n} \overset{uaw}{\longrightarrow} 0$, then $\|T (e_{n})\|=\|y_{n}\|\rightarrow 0$. Therefore, $Y$ has the Schur property.
	\end{proof}

Obviously, every uaw-Dunford-Pettis is $\sigma$-un-continuous. But the converse is not true in general. Indeed, $Id_{\ell^{\infty}}:\ell^{\infty}\longrightarrow \ell^{\infty}$ is $\sigma$-un-continuous which is not uaw-Dunford-Pettis. 

For this converse, we have the following.
		
	\begin{proposition}
		Let $E$ and $F$ be a pair of Banach lattices, such that $F$ has strong unit. If $E$ is order continuous, then, each $\sigma$-un-continuous $T:E\rightarrow F$ is uaw-Dunford-Pettis
	\end{proposition}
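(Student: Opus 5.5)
Take a norm bounded sequence $(x_n)$ in $E$ with $x_n\overset{uaw}{\longrightarrow}0$; the goal is to show $\|T(x_n)\|\rightarrow 0$. The plan has three steps. First, use order continuity of $E$ to upgrade uaw-convergence to un-convergence. Second, apply $\sigma$-un-continuity of $T$. Third, use the strong unit of $F$ to turn un-convergence in $F$ into norm convergence.

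\textbf{Step 1 (uaw to un in $E$).} In an order continuous Banach lattice, uaw-convergence and un-convergence agree on sequences. I would quote Zabeti's result \cite{article.7} for this. If a direct argument is wanted, take $u\in E^+$. The sequence $(|x_n|\wedge u)$ lies in the order interval $[0,u]$ and is weakly null by the definition of uaw-convergence. In an order continuous Banach lattice, order intervals are weakly compact. On a weakly compact order interval, a positive weakly null sequence is norm null, by the standard Dodds--Fremlin type fact that $|x_n|\wedge u\overset{w}{\longrightarrow}0$ inside $[0,u]$ forces $\||x_n|\wedge u\|\rightarrow 0$ when the norm is order continuous. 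Hence $x_n\overset{un}{\longrightarrow}0$.

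\textbf{Step 2 ($\sigma$-un-continuity).} Since $(x_n)$ is norm bounded and un-null, the hypothesis on $T$ gives $T(x_n)\overset{un}{\longrightarrow}0$ in $F$.

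\textbf{Step 3 (un to norm in $F$).} Let $e$ be a strong unit of $F$. In a Banach lattice with a strong unit, un-convergence coincides with norm convergence (Troitsky). I would verify this in this setting as follows. The sequence $(T(x_n))$ is norm bounded, say $\|T(x_n)\|\le M$. For any $y\in F$, the closed ideal generated by $e$ is all of $F$, and in fact $F=\bigcup_k[-ke,ke]$. The Baire category argument, or the standard fact that the principal ideal $I_e$ equals $F$ with an equivalent norm, then gives a constant $c$ with $|y|\le c\|y\|e$ for all $y\in F$. Put $u=cMe$. Then $|T(x_n)|\wedge u=|T(x_n)|$, so $\|T(x_n)\|=\||T(x_n)|\wedge u\|\rightarrow 0$.

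The main obstacle is the equivalence of norms in Step 3, i.e. the bound $|y|\le c\|y\|e$. I would obtain it from the closed graph or open mapping theorem applied to the identity map from $(F,\|\cdot\|_e)$ onto $(F,\|\cdot\|)$. Here $\|y\|_e=\inf\{\lambda: |y|\le\lambda e\}$ is a complete lattice norm dominating a multiple of $\|\cdot\|$, namely $\|y\|\le\|e\|\,\|y\|_e$. The identity is therefore continuous and bijective, so by the open mapping theorem its inverse is bounded, which is exactly the required constant $c$. Step 1 is essentially a citation.
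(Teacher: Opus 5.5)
Your proof is correct and follows the same three steps as the paper: the paper also passes from uaw- to un-convergence via [\cite{article.7}, Theorem 4] using order continuity of $E$, applies $\sigma$-un-continuity of $T$, and then uses the strong unit of $F$ to get norm convergence. The one difference is that the paper simply cites [\cite{article.4}, Theorem 3.2] for the last step, whereas you prove it directly from the equivalence of $\|\cdot\|_e$ with the norm, choosing $u=cMe$ via the norm bound on $(T(x_n))$; that argument is sound.
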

	
	\begin{proof}
		Let  $(x_{n})$ be a sequence in $E$ such that $x_{n}\overset{uaw}\longrightarrow 0$, by [\cite{article.7}, Theorem 4], $x_{n}\overset{un}\longrightarrow 0$, then $T(x_{n})\overset{un}\longrightarrow 0$. Using [\cite{article.4}, Theorem 3.2],  we obtain $T(x_{n})\overset{||.||}\longrightarrow 0$, as desired.
	\end{proof}

Next, we introduce a new property in Banach lattices.
	\begin{definition}\label{kkkk}
		A Banach lattice $E$ has the weak$^{\star}$ disjoint Dunford-Pettis property if $\lim\limits_{n\rightarrow \infty} f_{n}(x_{n})= 0$, for every disjoint norm bounded sequence $(x_{n})$ in $E$ and every weak$^{\star}$ null sequence $(f_{n})$ in $E^{\prime}$.
		
	\end{definition}

	\begin{remark}\label{LOU}	
	By Proposition \ref{cou}, $\ell^{\infty}$ has the disjoint weak$^{\star}$ Dunford-Pettis property.
				
		\end{remark}
	\begin{remark}
	 $\ell^{1}$ fails the disjoint weak$^{\star}$ Dunford-Pettis property.

	\end{remark}
	\begin{proof}
		$(\ell^{1})^{\prime}$ is not order continuous. Then it follows from [\cite{zan} Theorem 116.1] that, there exists a norm bounded disjoint sequence $(u_{n})$ of positive elements in $E$ which does not converge weakly to zero. Without loss of generality, we may assume that $\| u_{n}\|\leq 1$ for any $n$. And there exists $\varepsilon>0$ and $0\leq \phi \in E^{\prime}$ such that $\phi (u_{n})>\varepsilon$ for all $n$. Then by \cite[Theorem 116.3]{zan}, we know that the components $\phi_{n}$ of $\phi$ in the carriers $C_{u_{n}}$ form an order bounded disjoint sequence in $(E^{\prime})_{+}$ such that $\phi_{n}(u_{n})=\phi(u_{n})$ and $\phi_{n}(u_{m})=0$ for each $n,m$ $\in \mathbb{N}$. Since $\ell^{1}$ is order continuous then by \cite[Corollary 2.4.3]{book.2}, $\phi_{n}\overset{w*}{\longrightarrow} 0$. However, $\phi_{n}(u_{n}) \nrightarrow 0$. Hence $\ell^{1}$ fails the disjoint weak$^{\star}$ Dunford-Pettis property.
		
	\end{proof}
	
	By virtue of the disjoint weak$^{\star}$ Dunford-Pettis property we give this important characterization.
	\begin{theorem}\label{mou}
		
		Let $E$ be a $\sigma$-Dedekind complete Banach lattice, the following assertions are equivalent:

		(1) $E$ has the disjoint weak$^{\star}$ Dunford-Pettis property.

		(2) Every operator $T : E \longrightarrow c_{0}$ is  M-weakly operator.

		(3) Every operator $T : E \longrightarrow c_{0}$ is uaw-Dunford-Pettis operator.

	\end{theorem}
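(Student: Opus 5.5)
The plan is to exploit the correspondence between operators into $c_0$ and weak$^{\star}$ null sequences in $E^{\prime}$. Every operator $T:E\to c_0$ has the form $T(x)=(f_n(x))_n$ for a weak$^{\star}$ null sequence $(f_n)$ in $E^{\prime}$ (take $f_n=T^{\prime}(e_n^{\star})$). Conversely, every weak$^{\star}$ null sequence defines such an operator, which is bounded by the uniform boundedness principle. Throughout, a sequence is disjoint or uaw-null in $E$ and we control $\|T(x_n)\|_\infty=\sup_k |f_k(x_n)|$. Recall that $T$ is M-weakly compact iff $\|T(x_n)\|\to 0$ for every norm bounded disjoint sequence $(x_n)$. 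I will prove the cycle $(1)\Rightarrow(2)\Rightarrow(3)\Rightarrow(1)$.

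\textbf{Step $(1)\Rightarrow(2)$.} Suppose $(x_n)$ is bounded disjoint but $\|T x_n\|\not\to 0$. Then, after passing to a subsequence, there are $\varepsilon>0$ and indices $k_n$ with $|f_{k_n}(x_n)|>\varepsilon$. For each fixed $k$ we have $f_k(x_n)\to 0$, because a bounded disjoint sequence is uaw-null, hence weakly null after taking moduli. This is the standard fact that disjoint bounded sequences are uaw-null, due to Zabeti. Consequently $k_n\to\infty$, and a further subsequence gives $k_n$ strictly increasing. Then $g_n:=f_{k_n}$ is weak$^{\star}$ null and $|g_n(x_n)|>\varepsilon$, which contradicts (1) applied to the disjoint sequence $(x_n)$ along that subsequence.

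\textbf{Step $(2)\Rightarrow(3)$.} Let $(x_n)$ be bounded and uaw-null, and let $T:E\to c_0$. I would use the standard decomposition argument: M-weak compactness is equivalent to the following. For every $\varepsilon>0$ there is $u\in E^+$ with $\|T(|x|-|x|\wedge u)\|\le\varepsilon$ for all $x$ in the unit ball, or more precisely $\|T\|$ restricted to $(|x|-u)^+$ is small (Meyer-Nieberg, Theorem 5.60 / Proposition 3.6.x). Then write $x_n^{\pm}=x_n^\pm\wedge u+(x_n^\pm-u)^+$. The second part has small image uniformly. For the first part, uaw-nullity gives $|x_n|\wedge u\to 0$ weakly. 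Since $T$ is M-weakly compact, it is in particular weakly compact and, because $c_0$ fails nothing relevant here, more is needed. I would instead use that M-weakly compact operators send weakly null order bounded (by $u$) sequences to norm null ones. This holds because $T$ restricted to the ideal generated by $u$ behaves like an operator on an AM-space, and an M-weakly compact operator is Dunford--Pettis on order intervals (Meyer-Nieberg, Corollary 3.6.14). Hence $\|T(x_n^\pm\wedge u)\|\to 0$, and so $\|Tx_n\|\to 0$. The $\sigma$-Dedekind completeness hypothesis is where I expect to need it: to pass between $x_n$ and its positive and negative parts via band projections if the decomposition lemma requires it.

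\textbf{Step $(3)\Rightarrow(1)$.} Let $(x_n)$ be bounded disjoint and $(f_n)$ weak$^{\star}$ null. Define $T x=(f_n(x))_n\in c_0$. Since $(x_n)$ is disjoint and bounded it is uaw-null, so by (3) $\|Tx_n\|_\infty\to 0$. In particular $|f_n(x_n)|\le\|Tx_n\|\to 0$, which is (1). This step and step $(1)\Rightarrow(2)$ are routine.

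The main obstacle is $(2)\Rightarrow(3)$, specifically the claim that M-weakly compact operators map bounded uaw-null sequences to norm-null ones. As a fallback I would bypass $(2)$ and prove $(1)\Rightarrow(3)$ directly. Take a bounded uaw-null $(x_n)$ with $|f_{k_n}(x_n)|>\varepsilon$. Reduce, as in Step $(1)\Rightarrow(2)$, to $k_n$ increasing. Then replace $(x_n)$ by a disjoint sequence via the disjointification lemma for uaw-null sequences (Zabeti/Deng–O'Brien–Troitsky: a bounded uaw-null sequence has a subsequence which is a small perturbation of a disjoint sequence in $\sigma$-Dedekind complete $E$). This is where $\sigma$-Dedekind completeness enters, to form the band projections used in the perturbation. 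The perturbation must be small in a sense compatible with the uniformly bounded $(g_n)$, e.g.\ in norm. If only a weak form is available, I would instead split $g_n=g_n^+-g_n^-$ and use the Riesz–Kantorovich formulas to absorb the error.
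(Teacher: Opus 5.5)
\textbf{There are genuine gaps: both $(1)\Rightarrow(2)$ and $(2)\Rightarrow(3)$ rest on false claims, though each can be repaired.} Your $(3)\Rightarrow(1)$ is correct. Your $(1)\Rightarrow(2)$ is more direct than the paper's. Since the definition allows arbitrary weak$^{\star}$ null sequences, applying (1) to $g_n=f_{k_n}$ suffices. The paper instead detours through dominated disjoint functionals and \cite[Lemma 2.2]{article.JJ}, which is where it uses $\sigma$-Dedekind completeness.

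\textbf{First gap, in $(1)\Rightarrow(2)$.} The step ``$f_k(x_n)\to 0$ for each fixed $k$'' relies on the claim that bounded disjoint sequences are weakly null. This fails in general: the unit vectors of $\ell^1$ are a counterexample. All bounded disjoint sequences are weakly null only when $E'$ has order continuous norm \cite[Theorem 2.4.14]{book.2}. Without this step, $(k_n)$ may be bounded and you get no weak$^{\star}$ null $(g_n)$. You can repair it with (1) itself.
\begin{itemize}
\item For fixed $f\in E'$ and bounded disjoint $(x_n)$, choose disjoint $h_n\in E'$ with $|h_n|\le |f|$ and $h_n(x_n)=f(x_n)$ \cite[Ex.~22, p.~77]{book.1}, \cite[Theorem 116.3]{zan}.
\item Then $\sum_n |h_n(x)|\le |f|(|x|)$ for every $x$, so $(h_n)$ is weak$^{\star}$ null.
\item Hence (1) gives $f(x_n)=h_n(x_n)\to 0$.
\end{itemize}
The paper glosses over the same point by citing the proof in \cite{article.SDP}, which concerns weakly null sequences.

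\textbf{Second gap, in $(2)\Rightarrow(3)$; this is the serious one.} Your lemma says that M-weakly compact operators send weakly null order bounded sequences to norm null ones. It is false.
\begin{itemize}
\item The inclusion $L^2[0,1]\to L^1[0,1]$ is M-weakly compact: for disjoint $x_n$ in the unit ball, $\|x_n\|_1\le \mu(\operatorname{supp}x_n)^{1/2}\to 0$.
\item Yet the Rademacher functions lie in $[-\mathbf 1,\mathbf 1]$, are weakly null in $L^2$, and have $L^1$-norm $1$.
\end{itemize}
Your AM-space argument breaks down because the Dunford--Pettis property of $(E_u,\|\cdot\|_u)\cong C(K)$ only controls sequences that are weakly null in $E_u$. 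That is much stronger than weak nullity in $E$.

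You only need the positive case, applied to $x_n^{\pm}\wedge u$. These are weakly null because they are dominated by $|x_n|\wedge u$. The positive case is true:
\begin{itemize}
\item $T'$ is L-weakly compact, so for each $\varepsilon>0$ there is $g\in E'_+$ with $T'(B_{Y'})\subset[-g,g]+\varepsilon B_{E'}$ \cite[Section 3.6]{book.2}.
\item Hence $\|Ty\|\le g(|y|)+\varepsilon\|y\|$ for all $y$.
\item Therefore $\limsup_n \|T(x_n^{\pm}\wedge u)\|\le \varepsilon\|u\|$.
\end{itemize}
Alternatively, do what the paper does and cite the coincidence of M-weakly compact and uaw-Dunford--Pettis operators \cite[Theorem 18]{article.usp}.

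\textbf{Your fallback would not close the gap.} The disjointification available for bounded uaw-null sequences \cite[Theorem 8.11]{Thesis-ms} gives only $x_{n_k}-d_k\to 0$ weakly, not in norm. A weak$^{\star}$ null $(g_k)$ need not annihilate a weakly null perturbation unless the space has the DP$^{\star}$ property.

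With both repairs, your route never uses $\sigma$-Dedekind completeness.
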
	
	
	\begin{proof}	
		$(1)\Longrightarrow (2)$ Let $T : E \longrightarrow c_{0}$ be an arbitrary linear operator. We will prove that $||T(x_{n})||\longrightarrow 0$, for every norm bounded disjoint sequence
		$(x_{n}) \subset E$. Assume by way of contradiction that $||( T(x_{n}) )||$ does not converge to 0 for
		some disjoint sequence $(x_{n}) \in E$. Then, we can suppose that there would exists some $\epsilon > 0$ such that  $||T(x_{n})|| > \epsilon$
		for all $n \in  \mathbb{N}$.

		Using the same technical proof used in  [\cite{article.SDP}, Proposition 2.1] there exists a canonical projection, denoted $\pi_{k_{n}}$, from
		$c_{0}$ into its coordinate space  $\mathbb{R}$ such that $||T(x_{n})|| = |\pi_{k_{n}} \circ
		T(x_{n})|$. By the proof of [\cite{article.SDP}, Proposition 2.1]  the sequence $(k_{n}) \subset \mathbb{N}$
		can not be bounded. Again by passing to a subsequence if necessary, we can suppose
		that $({k_{n}})$ is strictly increasing. Then $(\pi_{k_{n}} \circ T)$ is a w$^{\star}$-null sequence of $E^{\prime}$.

		Note that $({x_{n}})$ is a disjoint sequence of $E$. then, by virtue of [\cite{book.1}, Ex.22, p.77]
		there exists a disjoint sequence $(f_{n})$ in $E^{\prime}$ such that
		\begin{center} 
			$|f_{n}| \leq |\pi_{k_{n}} \circ T|$

			$f_{n}(x_{n}) = (\pi_{k_{n}} \circ T )(x_{n}) = \pi_{k_{n}}
			(T(x_{n}))$.
		\end{center} 
		Since $(\pi_{k_{n}} \circ T)$ is w$^{\star}$-null sequence, by  [\cite{article.JJ}, Lemma 2.2] we have $(f_{n})$ is w$^{\star}$-null sequence in $E^{\prime}$. By hypothesis, it follows from Definition \ref{kkkk} that $f_{n} (x_{n})\longrightarrow 0$ as $n \longrightarrow \infty$.
		On the other hand,

		\begin{center} 
			$| f_{n} (x_{n})| = |(\pi_{k_{n}} \circ T )(x_{n})| = |\pi_{k_{n}}
			(T(x_{n}))| = T(x_{n}) > \epsilon > 0$. \end{center} 
		This is a contradiction. Hence, $||T(x_{n})|| \longrightarrow 0$ for every disjoint
		sequence $(x_{n})$ of $E$, that is, $T$ is an M-weakly compact operator.
		
		$(2)\Longrightarrow (1)$ By Definition \ref{kkkk}, to prove that $E$ has the disjoint weak$^{\star}$Dunford-Pettis property, we need to prove that for each norm bounded disjoint sequence $(x_{n})$ of $E$ and each
		disjoint w$^{\star}$-null sequence $(f_{n}) \subset E^{\prime}$, we have $f_{n} (x_{n})\longrightarrow 0$. Let consider the operator
		$T : E \longrightarrow c_{0}$ defined by $T(x) = (f_{n}(x))_{n}$ for any $x \in E$. By hypothesis, $T$ is M-weakly compact. Therefore, $||T(x_{n})|| \longrightarrow 0$ as $n \longrightarrow \infty$, hence $f_{n} (x_{n})\longrightarrow 0$, as desired.
		
		$(2) \Leftrightarrow (3)$ By virtue of [\cite{article.usp}, Theorem 18].
		
	\end{proof}

	\begin{proposition}\label{gou}
		Let $E$ be a Banach lattice with the disjoint weak$^{\star}$ Dunford-Pettis$^{\star}$ property, If $E^{\prime}$ and $E$ are order continuous, then $E$ is finite dimensional. 
		
	\end{proposition}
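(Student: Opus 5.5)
We aim to show that $E$ contains no infinite disjoint normalized sequence; a Banach lattice with this property is finite dimensional. So suppose $E$ is infinite dimensional and take a disjoint sequence $(x_n)$ of positive elements with $\|x_n\|=1$. The goal is to build a weak$^{\star}$ null sequence $(f_n)$ in $E^{\prime}$ with $f_n(x_n)$ bounded away from zero. This contradicts the disjoint weak$^{\star}$ Dunford-Pettis property of Definition \ref{kkkk}.

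The construction of $(f_n)$ copies the argument showing that $\ell^{1}$ fails the property. For each $n$ choose $0\le g_n\in E^{\prime}$ with $\|g_n\|=1$ and $g_n(x_n)=1$. Then replace $g_n$ by its component $f_n$ in the carrier of $x_n$, using \cite[Theorem 116.3]{zan} or \cite[Ex.22, p.77]{book.1}. This gives $0\le f_n\le g_n$, $f_n(x_n)=g_n(x_n)=1$, and the $f_n$ pairwise disjoint in $E^{\prime}$. We now have a norm bounded disjoint sequence $(f_n)$ in $E^{\prime}$.

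Next we show $(f_n)$ is weak$^{\star}$ null, which is where order continuity of $E$ is used. For a norm bounded disjoint sequence in the dual of an order continuous Banach lattice we have $|f_n|(x)\to 0$ for every $x\in E_+$, by \cite[Corollary 2.4.3]{book.2}, exactly as in the $\ell^1$ remark. Hence $f_n\overset{w^{\star}}{\longrightarrow}0$. Since $f_n(x_n)=1$ for all $n$, this contradicts the disjoint weak$^{\star}$ Dunford-Pettis property, so $E$ has no such sequence.

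The main obstacle is the final step from ``no infinite disjoint normalized sequence'' to finite dimensionality. An infinite dimensional Archimedean vector lattice always contains an infinite disjoint sequence of nonzero elements, by a standard fact from \cite{book.1}. Normalizing that sequence gives the required $(x_n)$, so $E$ is finite dimensional. Order continuity of $E^{\prime}$ seems not to be needed in this argument. If the carrier construction or the weak$^{\star}$ nullity in the second and third steps proves delicate in a general $E$, I would use $E^{\prime}$ order continuous as a fallback. In that case disjoint bounded sequences in $E$ are weakly null, which helps control the norming functionals. It would also make it possible to reduce to the case $E$ reflexive, using that $E$ and $E^{\prime}$ are both order continuous, so that $E$ is a KB-space.
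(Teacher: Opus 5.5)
Your argument is correct, and it takes a genuinely different route from the paper.

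\textbf{The paper's route.} It shows that every norm bounded disjoint sequence $(x_n)$ is norm null by combining two facts.
\begin{enumerate}
\item $|x_n|\overset{w}{\longrightarrow}0$, which is where order continuity of $E^{\prime}$ is used, via Zabeti's results.
\item $f_n(x_n)\to 0$ for every norm bounded disjoint $(f_n)\subset E^{\prime}$, which comes from order continuity of $E$ and the property.
\end{enumerate}
The Dodds--Fremlin criterion then gives $\|x_n\|\to 0$. The paper concludes finite dimensionality because $Id_E$ is then M-weakly compact, hence uaw-Dunford--Pettis.

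\textbf{Your route.} You choose the disjoint functionals to be norming, $f_n(x_n)=\|x_n\|=1$, so the second fact alone already gives a contradiction. This bypasses the Dodds--Fremlin criterion and the uaw machinery. As you observe, it also shows that the hypothesis that $E^{\prime}$ is order continuous is superfluous: only order continuity of $E$ is used. Your last step (an infinite dimensional Archimedean lattice contains an infinite disjoint sequence of nonzero elements) is standard, and it is essentially what the paper's final citation rests on anyway.

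\textbf{One point to justify explicitly.} You need to show the $f_n$ are pairwise disjoint. The Zaanen citation, as used in the paper's $\ell^1$ remark, concerns components of a \emph{single} functional, whereas your $g_n$ vary with $n$. The disjointness does hold, for the following reason. For $0\le g\in E^{\prime}$, the component of $g$ in the carrier of $x_n$ is given on $E_+$ by
\[
x\longmapsto \sup\{g(y): 0\le y\le x,\ y\in I_{x_n}\},
\]
where $I_{x_n}$ is the ideal generated by $x_n$. This functional vanishes on $I_{x_m}$ whenever $x_m\perp x_n$. So $f_n$ lies in the null ideal of $x_m$, which is the disjoint complement of the carrier containing $f_m$, and hence $f_n\perp f_m$. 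Alternatively, invoke \cite[Ex.22, p.77]{book.1} in the form with a sequence of functionals, as the paper itself does in the proof of Theorem \ref{mou}.

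\textbf{Drop the fallback remark.} Order continuity of both $E$ and $E^{\prime}$ does not make $E$ a KB-space, let alone reflexive: $E=c_0$ is a counterexample. Your main argument does not need this, so nothing is lost.
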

	\begin{proof}
		
		Let $(x_{n})$ be a disjoint norm bounded sequence of $E$, by Lemma 2 and Lemma 1 of \cite{article.7}, $|x_{n}| \overset{uaw}{\longrightarrow} 0$, then by [\cite{article.7}, Theorem 7] we have $ |x_{n}| \overset{w}{\longrightarrow} 0$. ($^{\star}$)

		Let $(f_{n})$ be a norm bounded disjoint sequence in $E^{\prime}$, by \cite[Corollary 2.4.3]{book.2}, $f_{n}$ is w$^{\star}$-null sequence, by assumption, $f_{n}(x_{n}){\longrightarrow} 0$. ($^{\star}$$^{\star}$)

		From ($^{\star}$) and ($^{\star}$$^{\star}$) and [\cite{article.DF}, Corollary 2.6], we conclude that  $x_{n} \overset{||.||}{\longrightarrow} 0$.

		 Now, by [\cite{article.usp}, Theorem 18] and [\cite{article.dim}, Corollary 5] we claim that $E$ is finite dimensional.	
	\end{proof}
	We introduce the weak$^{\star}$ version of uaw-Dunford-Pettis operators.
	\begin{definition}
		An operator $T:E\longrightarrow Y$ from a Banach lattice 
 $E$ into a Banach space $Y$ is said to be disjoint weak$^{\star}$ Dunford-Pettis, if for every norm bounded disjoint sequence $(x_n)$ in $E$, and  every  sequence $f_n\overset{w^{\star} }{\longrightarrow} 0$ in $Y^{\prime}$ imply, $\lim\limits_{n\rightarrow \infty } f_{n}(T(x_{n}))=0$.
\end{definition}

	Now, we consider some ideal properties for the class of disjoint weak$^{\star}$ Dunford-Pettis operator. But before investigating this, we give  the following Example.

$$    
\begin{array}{lrcl} 
 T : & \ell^1 & \longrightarrow & \ell^{\infty} \\
    & \lambda_n & \longmapsto & \ ( \displaystyle\sum_{n=1}^{\infty}\lambda_n, \displaystyle\sum_{n=1}^{\infty}\lambda_n,...)  \end{array}  
$$

  By Remark \ref{LOU} and Proposition \ref{pr333} $Id_{\ell^{\infty}}$ is a disjoint weak$^{\star}$ Dunford-Pettis operator, However  $T=Id_{\ell^{\infty}}\circ T$ is not disjoint weak$^{\star}$ Dunford-Pettis. Indeed, let $f_n\overset{w^{\star} }{\longrightarrow} 0$ in $(\ell^{\infty})^{\prime}$, then there exists a unique sequence of scalars  $(\alpha_n)$  such that $f_n= \displaystyle\sum_{n=1}^{\infty}\alpha_n e_n$. Moreover, we have that

\begin{align*}
\langle T(e_n), f_n \rangle & = \langle ( \sum_{n=1}^{\infty}e_n, \sum_{n=1}^{\infty}e_n,.,...), \sum_{n=1}^{\infty}\alpha_n e_n \rangle \\	& = \sum_{n=1}^{\infty}\alpha_n \langle ( \sum_{n=1}^{\infty}e_n, \sum_{n=1}^{\infty}e_n,..), e_n \rangle	\\ 	& = \sum_{n=1}^{\infty}\alpha_n . \sum_{n=1}^{\infty}e_n \langle  (1,1,..), e_n \rangle		 \\ & =  \sum_{n=1}^{\infty}\alpha_n .\sum_{n=1}^{\infty}e_n  \nrightarrow 0
\end{align*}

   We clearly see that $T$ is not a disjoint weak$^{\star}$ Dunford-Pettis operator. We also claim that $T$ is not  disjointness preserving operator. Indeed, since $(e_n)$ is disjoint then  $T(e_n\land e_m)$ is null in $\ell^{\infty}$, but
 \begin{center} $T(e_n)\land T(e_m) = (1,1,1,....) \neq (0,0,0,.....)$

	\end{center}
	
	\begin{proposition}\label{ll}
		Let $S:E\longrightarrow F$ and $T:F\longrightarrow G$ be two operators between Banach
		lattices $E$, $F$ and $G$.
		\begin{enumerate}
			\item If $T$ is bounded and $S$ is a disjoint weak$^{\star}$ Dunford-Pettis operator, then $T\circ S$ is also disjoint weak$^{\star}$ Dunford-Pettis.
			\item If $S$ is disjointness preserving and $T$ is a disjoint weak$^{\star}$ Dunford-Pettis operator, then $T\circ S$ is disjoint weak$^{\star}$ Dunford-Pettis.
		\end{enumerate}
	\end{proposition}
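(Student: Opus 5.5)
Both parts follow directly from the definition of a disjoint weak$^{\star}$ Dunford-Pettis operator. The only tools needed are that adjoints of bounded operators are weak$^{\star}$-continuous, and that disjointness preserving operators send disjoint sequences to disjoint sequences.

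For (1), I would fix a norm bounded disjoint sequence $(x_n)$ in $E$ and a sequence $(g_n)$ in $G^{\prime}$ with $g_n\overset{w^{\star}}{\longrightarrow}0$. Put $f_n=T^{\prime}(g_n)=g_n\circ T\in F^{\prime}$. For every $y\in F$ we have $f_n(y)=g_n(T(y))\rightarrow 0$, so $(f_n)$ is weak$^{\star}$ null in $F^{\prime}$. This is just weak$^{\star}$-to-weak$^{\star}$ continuity of $T^{\prime}$, and it uses only that $T$ is bounded. Since $S$ is disjoint weak$^{\star}$ Dunford-Pettis, we get $g_n(T\circ S(x_n))=f_n(S(x_n))\rightarrow 0$, which is exactly the claim.

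For (2), again take a norm bounded disjoint sequence $(x_n)$ in $E$ and $(g_n)$ weak$^{\star}$ null in $G^{\prime}$. Set $y_n=S(x_n)$. Since $S$ is disjointness preserving, $|x_n|\wedge|x_m|=0$ for $n\neq m$ gives $|S(x_n)|\wedge|S(x_m)|=0$, so $(y_n)$ is a disjoint sequence in $F$. It is norm bounded because $\|y_n\|\le\|S\|\,\|x_n\|$; here I use that $S$ is a bounded operator, per the paper's convention that ``operator'' means bounded linear map. Since $T$ is disjoint weak$^{\star}$ Dunford-Pettis, $g_n(T(y_n))\rightarrow 0$, that is, $g_n(T\circ S(x_n))\rightarrow 0$.

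There is no real obstacle here. The only point to check is that disjointness is preserved in (2), and that follows immediately from the definition of a disjointness preserving operator. I would also note that in (1) no lattice structure on $G$ is needed, and in (2) none is needed on $T$'s codomain beyond being a Banach space.
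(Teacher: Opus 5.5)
Your proof is correct and follows the paper's argument essentially verbatim: in (1) you use that $g_n\circ T$ is weak$^{\star}$ null in $F^{\prime}$ and then apply the hypothesis on $S$, and in (2) you use that $(S(x_n))$ is disjoint and then apply the hypothesis on $T$. You also note explicitly that $(S(x_n))$ is norm bounded because $S$ is bounded, which is a detail the paper's proof leaves implicit.
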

	\begin{proof}
		$(1)$ Let $(x_{n})$ be a norm bounded disjoint sequence of $E$ and $(f_{n})\subset G^{\prime}$ such that  $f_{n}\overset{w^{\star}}{\longrightarrow}0$. Then $f_{n}\circ T\overset{w^{\star}}{\longrightarrow}0\quad(\star)$.

		By assumption $S$ is disjoint weak$^{\star}$ Dunford-Pettis, it follows from $(\star)$ that $f_{n}\circ T\left(S(x_{n})\right)=f_{n}\left(T\circ S(x_{n})\right)\longrightarrow 0$, for each null weak$^{\star}$ convergent sequence $(f_{n})$ in $G^{\prime}$. Hence $T\circ S$ is disjoint weak$^{\star}$ Dunford-Pettis.

		$(2)$ Let $(x_{n})$ be a disjoint sequence of $E$  and $(f_{n})\subset G^{\prime}$ such that $f_{n}\overset{w^{\star}}{\longrightarrow}0$. As, $S$ is disjointness preserving operator, it follows that $(S(x_{n}))$ is a disjoint sequence.  In this step, assumption asserts that
		$f_{n} \left(T(S(x_{n})\right)\longrightarrow 0$. Therefore, $T\circ S$ is disjoint weak$^{\star}$ Dunford-Pettis.

	\end{proof}	
	\begin{proposition}
		Every M-weakly compact $T:E\longrightarrow Y$ is disjoint weak$^{\star}$ Dunford-Pettis. 
	\end{proposition}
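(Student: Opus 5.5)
The plan is to go straight from the definitions, using the uniform boundedness principle to control the functionals. Recall that an operator $T:E\longrightarrow Y$ from a Banach lattice into a Banach space is M-weakly compact if $\|T(x_n)\|\rightarrow 0$ for every norm bounded disjoint sequence $(x_n)$ in $E$. So fix a norm bounded disjoint sequence $(x_n)$ in $E$ and a sequence $(f_n)$ in $Y^{\prime}$ with $f_n\overset{w^{\star}}{\longrightarrow}0$. We must show that $f_n(T(x_n))\rightarrow 0$.

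The first step is to bound the functionals uniformly. Since $Y$ is a Banach space and $(f_n)$ is weak$^{\star}$ convergent, the sequence $(f_n(y))$ is bounded for every $y\in Y$. The Banach--Steinhaus theorem (uniform boundedness principle) then gives $M:=\sup_n\|f_n\|<\infty$.

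The second step is the estimate itself. Since $T$ is M-weakly compact and $(x_n)$ is norm bounded and disjoint, we have $\|T(x_n)\|\rightarrow 0$. Therefore
$$|f_n(T(x_n))|\leq \|f_n\|\,\|T(x_n)\|\leq M\,\|T(x_n)\|\longrightarrow 0,$$
which is exactly the defining condition of a disjoint weak$^{\star}$ Dunford-Pettis operator.

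There is no real obstacle here. The only point needing care is the uniform bound on $\|f_n\|$, and that uses completeness of $Y$, which is part of the hypothesis. The argument uses only the weak$^{\star}$ convergence of $(f_n)$ through this bound; the convergence to $0$ is not otherwise needed.
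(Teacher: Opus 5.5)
Your proof is correct and is the paper's argument: the paper also concludes directly from the estimate $|f_n(Tx_n)|\leq (\sup_n\|f_n\|)\,\|T(x_n)\|$ together with $\|T(x_n)\|\rightarrow 0$ from M-weak compactness. You also justify $\sup_n\|f_n\|<\infty$ explicitly via the uniform boundedness principle, using that $Y$ is complete, which the paper leaves implicit.
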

	\begin{proof}
Let $T:E\longrightarrow Y$ be an M-weakly compact, by the inequality \begin{center} $|f_{n}(T x_{n})|\leq (\sup_{n}\|f_{n} \|) \| T(x_{n})\|$\end{center} We conclude that $T$ is disjoint weak$^{\star}$ Dunford-Pettis.

	\end{proof}
	\begin{remark}
The converse  of the above result is not true in general. In fact, let consider the identity operator on $\ell^{\infty}$.  By Remark \ref{LOU} and Proposition \ref{pr333}, we know that $Id_{\ell^{\infty}}$ is disjoint weak$^{\star}$ Dunford-Pettis but it is not M-weakly compact.
\end{remark}
	\begin{proposition}\label{pr3}
	Let $E$ and $F$ be two Banach lattices, if $E^{\prime}$ and $F$ are order continuous. Then every positive disjoint weak$^{\star}$ Dunford-Pettis $T : E \longrightarrow F$ is M-weakly compact (hence uaw-Dunford-Pettis).

\end{proposition}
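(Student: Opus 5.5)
Fix a norm bounded disjoint sequence $(x_n)$ in $E$; we must show $\|T(x_n)\|\to 0$. Since $\|T(x_n)\|\le \|T(x_n^+)\|+\|T(x_n^-)\|$ and $(x_n^+)$, $(x_n^-)$ are again norm bounded disjoint sequences, we may assume $x_n\ge 0$. Then $T(x_n)\ge 0$ by positivity of $T$.

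First I would obtain weak convergence. Because $E^{\prime}$ is order continuous, every norm bounded disjoint sequence in $E$ is weakly null (\cite[Theorem 2.4.14]{book.2}; equivalently, via Lemmas 1, 2 and Theorem 7 of \cite{article.7}, as in the proof of Proposition \ref{gou}). Hence $x_n\overset{w}{\longrightarrow}0$, and therefore $T(x_n)\overset{w}{\longrightarrow}0$ in $F$, with $T(x_n)\ge 0$.

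Next I would produce norming functionals. For each $n$ choose $0\le g_n\in F^{\prime}$ with $\|g_n\|\le 1$ and $g_n(T x_n)=\|T x_n\|$; this is possible because $T x_n\ge 0$. The aim is to replace $(g_n)$ by a weak$^{\star}$ null sequence while keeping $h_n(Tx_n)$ comparable to $\|Tx_n\|$. Since $F$ is order continuous, \cite[Corollary 2.4.3]{book.2} shows that norm bounded disjoint sequences in $F^{\prime}$ are weak$^{\star}$ null. So I would argue by contradiction. Suppose $\|Tx_n\|>\varepsilon$ along a subsequence. Using that $(Tx_n)$ is positive and weakly null, I would apply the disjointification lemma (\cite[Ex.~22, p.~77]{book.1} together with \cite[Lemma 2.2]{article.JJ}, as in the proof of Theorem \ref{mou}). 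Alternatively I would use the standard Dodds--Fremlin type argument \cite[Corollary 2.6]{article.DF}. Either route should yield a further subsequence and a disjoint sequence $(h_k)$ in $F^{\prime}_+$ with $\|h_k\|\le 1$ and $h_k(Tx_{n_k})\ge \varepsilon/2$. Then $h_k\overset{w^{\star}}{\longrightarrow}0$, and the disjoint weak$^{\star}$ Dunford-Pettis property of $T$, applied to the disjoint sequence $(x_{n_k})$, gives $h_k(Tx_{n_k})\to 0$, which is a contradiction. This proves that $T$ is M-weakly compact.

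The main obstacle is this disjointification step. One must ensure that the sequence $(Tx_n)$, being positive and weakly null but not norm null, admits almost disjoint norming functionals. This is exactly where the weak nullity from the first step is used: the functionals $g_m$ applied to $Tx_n$ for $n\gg m$ become small, so one can subtract the components $\bigl(g_k-4^k\sum_{j<k}g_j\bigr)^+$ in the usual manner. If that proves awkward, I would instead invoke the characterization that a positive operator is M-weakly compact when $\|T x_n\|\to0$ for disjoint weakly null $(x_n)$ tested against disjoint bounded sequences in $F^{\prime}$ (Dodds--Fremlin). Finally, the parenthetical claim follows because M-weakly compact operators are uaw-Dunford-Pettis by \cite[Theorem 18]{article.usp}.
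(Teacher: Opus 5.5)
Your argument is correct and is essentially the paper's proof. The paper likewise gets weak nullity from the order continuity of $E^{\prime}$ (it shows $|Tx_n|\le T|x_n|\overset{w}{\longrightarrow}0$ rather than reducing to $x_n\ge 0$), uses \cite[Corollary 2.4.3]{book.2} to see that disjoint norm bounded sequences in $(F^{\prime})^{+}$ are weak$^{\star}$ null so that $f_n(Tx_n)\to 0$, and concludes with \cite[Corollary 2.6]{article.DF}, whose contrapositive is exactly your disjoint sequence $(h_k)$. Of the routes you list, only this Dodds--Fremlin one works as written: the lemma \cite[Ex.~22, p.~77]{book.1} used in Theorem \ref{mou} requires the vectors $(Tx_n)$ to be disjoint, which they are not, and your explicit $\bigl(g_k-4^k\sum_{j<k}g_j\bigr)^+$ need not be disjoint unless you also subtract $2^{-k}g$ with $g=\sum_j 2^{-j}g_j$, as in \cite[Lemma 4.35]{book.1}.
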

\begin{proof}
	
Let $T : E \longrightarrow F$ be a positive disjoint weak$^{\star}$ Dunford-Pettis operator and  let $(x_{n})$ be a disjoint sequence in $B_{E}$. By [\cite{article.DF}, Corollary 2.6], it suffices to prove that $| Tx_{n}| \overset{w}\longrightarrow 0$ in $E$ and $f_{n}(Tx_{n}) \longrightarrow 0$ for every disjoint  norm bounded sequence $(f_{n}) \subset (F^{\prime})^{+}$.\\
Let $(x_{n})$ be a disjoint sequence in $E$, by Lemma 2 of \cite{article.7}, $x_{n}\overset{uaw}\longrightarrow0$. Then, by Lemma 1 of \cite{article.7}, $|x_{n}|\overset{uaw}\longrightarrow0$. Now, using [\cite{article.7},Theorem 7] it follows that $x_{n}\overset{w}\longrightarrow0$ and $|x_{n}|\overset{w}\longrightarrow0$.

In this step, we infer from [\cite{book.1}, Theorem 1.23] that for each $n\in\mathbb{N}$
there exists some $g_{n} \in [-f,f]$ with $f|T(x_{n})| = g_{n}(T(x_{n}))$. Since $|x_{n}|\overset{w}\longrightarrow0$ and $T$ is positive, we will have
\begin{align*}
0 \leq f|Tx_{n}| = g_{n}(Tx_{n})  & = T^{\prime}(g_{n})(x_{n})\\
        & \leq |T^{\prime}(g_{n})||x_{n}| \\
        &\leq T^{\prime}(f)|x_{n}|\longrightarrow0
\end{align*}

Therefore, $|T(x_{n})|\overset{w}\longrightarrow0$.

Now, let $(f_{n})\subset (F^{\prime})^{+}$ be a disjoint and norm bounded sequence. By \cite[Corollary 2.4.3]{book.2}, $f_{n}\overset{w^{*}}\rightarrow 0$. As $T$ is disjoint weak$^{\star}$ Dunford-Pettis, it follows that $f_{n} (T (x_{n}))\longrightarrow 0$. This proves that $T$ is M-weakly compact. (By [\cite{article.usp}, Theorem 18], $T$ is uaw-Dunford-Pettis).
	
\end{proof}

The next result provides important characterization of the the disjoint weak$^{\star}$ Dunford-Pettis property.
	\begin{proposition}\label{pr333}
	Let $E$ be a Banach lattice. Then the following statements are equivalent:
	
	\begin{enumerate}
		\item $E$ has the disjoint weak$^{\star}$ Dunford-Pettis property.
		
		\item Each operator $T:E\longrightarrow E$ is disjoint weak$^{\star}$ Dunford-Pettis.
		
		\item The identity operator of $E$ is disjoint weak$^{\star}$ Dunford-Pettis.
		\item	For every norm bounded disjoint sequence $(x_{n})$ in $E$, the subset $\{{x_{n},n\in N\}}$ is limited.

	\end{enumerate}
\end{proposition}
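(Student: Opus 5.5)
The plan is to prove the cycle $(1)\Rightarrow(2)\Rightarrow(3)\Rightarrow(1)$ and then the equivalence $(1)\Leftrightarrow(4)$. Throughout I use the standard characterization of limited sets: a bounded set $A\subset E$ is limited if and only if $\sup_{x\in A}|f_n(x)|\to 0$ for every weak$^{\star}$ null sequence $(f_n)$ in $E^{\prime}$. Equivalently, $f_n(a_n)\to 0$ for every sequence $(a_n)\subset A$ and every weak$^{\star}$ null $(f_n)$.

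\emph{Step $(1)\Rightarrow(2)$.} Let $T:E\to E$ be an operator, $(x_n)$ a norm bounded disjoint sequence in $E$, and $f_n\overset{w^{\star}}{\longrightarrow}0$ in $E^{\prime}$. Since $T^{\prime}$ is weak$^{\star}$-to-weak$^{\star}$ continuous, $T^{\prime}f_n\overset{w^{\star}}{\longrightarrow}0$. Hence (1) gives $f_n(Tx_n)=(T^{\prime}f_n)(x_n)\to 0$. This is exactly the argument of Proposition~\ref{ll}(1), with $S=Id_E$ playing the role of the disjoint weak$^{\star}$ Dunford-Pettis factor once we know the identity has that property. For that reason it is cleaner to do it directly as above.

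\emph{Steps $(2)\Rightarrow(3)$ and $(3)\Rightarrow(1)$.} The first is trivial, taking $T=Id_E$. The second is literally the definition, since $f_n(Id_E(x_n))=f_n(x_n)$.

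\emph{Step $(1)\Leftrightarrow(4)$.} For $(1)\Rightarrow(4)$, fix a norm bounded disjoint $(x_n)$ and a weak$^{\star}$ null $(f_n)$, and suppose $\sup_k|f_n(x_k)|\not\to 0$. Then there are $\varepsilon>0$, a subsequence $(n_j)$ and indices $k_j$ with $|f_{n_j}(x_{k_j})|>\varepsilon$. Since $f_n\overset{w^{\star}}{\longrightarrow}0$, for each fixed $k$ we have $f_n(x_k)\to 0$, so each index $k$ can occur only finitely often among the $k_j$. Passing to a further subsequence, we may assume $(k_j)$ is strictly increasing. Then $(x_{k_j})_j$ is again a norm bounded disjoint sequence and $(f_{n_j})_j$ is weak$^{\star}$ null, so (1) gives $f_{n_j}(x_{k_j})\to 0$, a contradiction. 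For $(4)\Rightarrow(1)$, given disjoint bounded $(x_n)$ and weak$^{\star}$ null $(f_n)$, we have $|f_n(x_n)|\le\sup_k|f_n(x_k)|\to 0$ by limitedness of $\{x_k\}$.

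The only delicate point is the diagonal extraction in $(1)\Rightarrow(4)$. There one must make sure the indices $k_j$ can be chosen strictly increasing, so that the subsequence $(x_{k_j})$ is still disjoint and (1) applies. This is handled by the pointwise weak$^{\star}$ convergence $f_n(x_k)\to 0$ for each fixed $k$, which forces each $k$ to appear only finitely often.
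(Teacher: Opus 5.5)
Your proof is correct and follows essentially the same route as the paper. Both use the transpose argument for $(1)\Rightarrow(2)$, the trivial steps, and the inequality $|f_n(x_n)|\le\sup_k|f_n(x_k)|$ for $(4)\Rightarrow(1)$; your $(1)\Rightarrow(4)$ is the paper's contradiction argument for $(3)\Rightarrow(4)$, since (3) is literally (1). Your version is more careful there: you use $f_n(x_k)\to 0$ for each fixed $k$ to make the indices $k_j$ strictly increasing, whereas the paper glosses over the mismatch between the indices of $f_n$ and $x_{n_k}$.
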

\begin{proof}
	$(1)\Longrightarrow(2)$ Let $(f_{n}) \subset E^{\prime}$ such that $f_{n}\overset{w^{\star}}{\longrightarrow} 0$, and let consider a norm bounded disjoint sequence $(x_{n})\subset E$.\\
	Obviously, $f_{n}\circ T\overset{w^{\star}}{\longrightarrow} 0$, by assumption, $f_{n} (T (x_{n}))\longrightarrow 0$, this ends the proof.
	
	$(2)\Longrightarrow (3)$ Obvious.

	$(3)\Longrightarrow (4)$ By way of contradiction, let suppose that $A= \{{x_{n},n\in N\}}$ is not limited. Let $(f_{n})$ be a weak$^{\star}$ null sequence of $E^{\prime}$ and let suppose that $(f_{n})$ does not converge uniformly on $A$. Then there exists a subsequence $(x_{n_k})$ of $(x_{n})$ such that 	
	$0<\varepsilon<|f_{n}(x_{n_k})|$.
	
	By assumption, $\lim\limits_{n\rightarrow \infty } f_{n}(x_{n_k})=0$. This is a contradiction. Then $A$ is limited.

	$(4)\Longrightarrow (1)$	By the virtue of the inequality $|f_{n}(x_{n})|\leq\sup_{k}|f_{n}(x_{k})|$ for every weak$^{\star}$ null sequence $(f_{n})$ of $E^{\prime}$.
\end{proof}

	The following result follows from the Proposition \ref{pr333}.
	\begin{corollary}\label{corr}
		Let $E$ be a Banach lattice. Then the following statements are equivalent:
		
		\begin{enumerate}
			\item $E$ has the disjoint weak$^{\star}$ Dunford-Pettis property.
			
			\item For an arbitrary Banach space $Y$, every operator $T:E\longrightarrow Y$ is disjoint weak$^{\star}$ Dunford-Pettis.
			
			\item For an arbitrary Banach lattice $F$, every operator $T:E\longrightarrow F$ is disjoint weak$^{\star}$ Dunford-Pettis.
		\end{enumerate}
			\end{corollary}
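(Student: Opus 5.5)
The plan is to prove the cycle $(1)\Rightarrow(2)\Rightarrow(3)\Rightarrow(1)$.

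\emph{Step $(1)\Rightarrow(2)$.} This is the argument of $(1)\Rightarrow(2)$ in Proposition \ref{pr333}, now with an arbitrary target space. Let $Y$ be a Banach space and $T:E\longrightarrow Y$ an operator. Fix a norm bounded disjoint sequence $(x_{n})$ in $E$ and a sequence $f_{n}\overset{w^{\star}}{\longrightarrow}0$ in $Y^{\prime}$. For each $x\in E$ we have $(f_{n}\circ T)(x)=f_{n}(Tx)\to 0$, so $T^{\prime}f_{n}=f_{n}\circ T$ is weak$^{\star}$ null in $E^{\prime}$. Since $E$ has the disjoint weak$^{\star}$ Dunford-Pettis property (Definition \ref{kkkk}), $f_{n}(T(x_{n}))=(T^{\prime}f_{n})(x_{n})\to 0$. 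Hence $T$ is disjoint weak$^{\star}$ Dunford-Pettis.

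\emph{Step $(2)\Rightarrow(3)$.} This is immediate, because every Banach lattice $F$ is in particular a Banach space.

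\emph{Step $(3)\Rightarrow(1)$.} Take $F=E$ in $(3)$. Then every operator $E\longrightarrow E$ is disjoint weak$^{\star}$ Dunford-Pettis, which is assertion $(2)$ of Proposition \ref{pr333}. That proposition then gives that $E$ has the disjoint weak$^{\star}$ Dunford-Pettis property. One could also argue directly: the identity of $E$ is disjoint weak$^{\star}$ Dunford-Pettis, and this is literally Definition \ref{kkkk}.

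I do not expect a real obstacle. The only point to check is the weak$^{\star}$ continuity of adjoints, which is the short computation above.
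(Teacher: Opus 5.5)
Your argument is correct and is essentially the paper's own. The paper gives no separate proof; it only states that the corollary follows from Proposition \ref{pr333}, and your cycle is exactly that derivation: the adjoint argument $T^{\prime}f_{n}=f_{n}\circ T\overset{w^{\star}}{\longrightarrow}0$ from $(1)\Rightarrow(2)$ there, carried over to an arbitrary target $Y$; the trivial passage to Banach lattices; and finally taking $F=E$. Your direct remark that $Id_{E}$ being disjoint weak$^{\star}$ Dunford-Pettis is literally Definition \ref{kkkk} is the cleanest way to close $(3)\Rightarrow(1)$, because it avoids routing through the limited-set condition (4) of Proposition \ref{pr333}.
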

	\begin{corollary}
	Let $E$ be a Banach lattice, if $E$ is order continuous, then every positive operator	$T:\ell^{\infty}\longrightarrow E$ is uaw-Dunford-Pettis.
\end{corollary}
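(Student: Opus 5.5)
The plan is to route the operator through the disjoint weak$^{\star}$ Dunford-Pettis property of $\ell^{\infty}$ and then apply Proposition \ref{pr3}. Let $T:\ell^{\infty}\longrightarrow E$ be positive, with $E$ order continuous.

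First, by Remark \ref{LOU}, $\ell^{\infty}$ has the disjoint weak$^{\star}$ Dunford-Pettis property. Corollary \ref{corr} ((1)$\Rightarrow$(3), with $F=E$) then shows that every operator $\ell^{\infty}\longrightarrow E$ is disjoint weak$^{\star}$ Dunford-Pettis; in particular $T$ is.

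Next, we check the hypotheses of Proposition \ref{pr3} with $\ell^{\infty}$ as the domain lattice and $F=E$ as the target. The target $E$ is order continuous by assumption, and $T$ is positive. The remaining hypothesis is that the norm of $(\ell^{\infty})^{\prime}$ be order continuous. This is the main obstacle: $(\ell^{\infty})^{\prime}$ is an AL-space, so its norm is in fact order continuous, but one should confirm that this is the hypothesis actually used in the proof of Proposition \ref{pr3}. In that proof the facts used about the domain are the following.
\begin{enumerate}
\item Disjoint bounded sequences in the domain are weakly null, via uaw-convergence. This holds in $\ell^{\infty}$ because, by [\cite{article.7}, Theorem 7], uaw-null bounded sequences are weakly null exactly when $E^{\prime}$ is order continuous, and $(\ell^{\infty})^{\prime}$ is order continuous as an AL-space.
\item Disjoint bounded sequences of positive functionals on $F=E$ are weak$^{\star}$ null. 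This follows from \cite[Corollary 2.4.3]{book.2} because $E$ is order continuous.
\end{enumerate}
Both facts are available, so Proposition \ref{pr3} applies.

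Proposition \ref{pr3} then gives that $T$ is M-weakly compact, and by [\cite{article.usp}, Theorem 18] $T$ is uaw-Dunford-Pettis. If any doubt remains about item (1), I would verify directly that every disjoint bounded sequence $(x_n)$ in $\ell^{\infty}$ satisfies $|x_n|\overset{w}{\longrightarrow}0$. To do this, I would pair with a positive functional $\phi$ and note that $\sum_n \phi(|x_n|)\le \phi(\sup_n|x_n|)<\infty$, since the supremum exists in the Dedekind complete space $\ell^{\infty}$; hence $\phi(|x_n|)\to 0$. This is exactly the estimate needed to run the argument of Proposition \ref{pr3}.
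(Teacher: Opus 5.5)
Your proposal is correct and follows the paper's proof: Remark \ref{LOU} together with Corollary \ref{corr} makes $T$ disjoint weak$^{\star}$ Dunford-Pettis, and Proposition \ref{pr3} then gives that $T$ is M-weakly compact, hence uaw-Dunford-Pettis. Proposition \ref{pr3} needs its domain to have order continuous dual, and the paper leaves this unstated; your explicit check that it holds because $(\ell^{\infty})^{\prime}$ is an AL-space is a useful addition.
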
	
	\begin{proof}	
	Let $T:\ell^{\infty}\longrightarrow E$ be an arbitrary positive operator. It follows from Remark \ref{LOU} and Corollary \ref{corr} that $T$ is disjoint weak$^{\star}$ Dunford-Pettis, moreover by Proposition \ref{pr3}, $T$ is uaw-Dunford-Pettis. 
	\end{proof}	
	
	We discuss below the relationships between the disjoint weak$^{\star}$ Dunford-Pettis property and other defined properties.

	\begin{proposition}\label{cou}
		Let $E$ be a Banach lattice with the Dunford-Pettis$^{\star}$ property. If $E^{\prime}$ is order continuous, then $E$ has the disjoint weak$^{\star}$ Dunford-Pettis property. 
		
	\end{proposition}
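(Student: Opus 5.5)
The strategy is to reduce the disjoint weak$^{\star}$ Dunford-Pettis property directly to the Dunford-Pettis$^{\star}$ property. The only missing ingredient is that, when $E^{\prime}$ is order continuous, every norm bounded disjoint sequence in $E$ is weakly null. So fix a norm bounded disjoint sequence $(x_{n})$ in $E$ and a weak$^{\star}$ null sequence $(f_{n})$ in $E^{\prime}$. We must show that $f_{n}(x_{n})\longrightarrow 0$.

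\textbf{Step 1 (disjoint sequences are weakly null).} By Lemma 2 of \cite{article.7}, the disjoint sequence $(x_{n})$ satisfies $x_{n}\overset{uaw}{\longrightarrow}0$. Since the norm of $E^{\prime}$ is order continuous, [\cite{article.7}, Theorem 7] upgrades this for norm bounded sequences to $x_{n}\overset{w}{\longrightarrow}0$. This is exactly the argument already used in the proofs of Proposition \ref{gou} and Proposition \ref{pr3}.

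As a fallback, one can use the classical fact that the norm of $E^{\prime}$ is order continuous if and only if every norm bounded disjoint sequence of $E$ is weakly null ([\cite{book.2}, Theorem 2.4.14], or \cite[Theorem 116.1]{zan} as invoked in the remark on $\ell^{1}$). This also yields $x_{n}\overset{w}{\longrightarrow}0$.

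\textbf{Step 2 (apply the DP$^{\star}$ property).} Since $E$ has the Dunford-Pettis$^{\star}$ property, every weakly null sequence $(x_{n})$ and every weak$^{\star}$ null sequence $(f_{n})$ satisfy $f_{n}(x_{n})\longrightarrow 0$. By Definition \ref{kkkk}, this is precisely the disjoint weak$^{\star}$ Dunford-Pettis property.

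The only step with real content is Step 1, namely confirming that order continuity of $E^{\prime}$, rather than of $E$, is the hypothesis that forces disjoint bounded sequences to be weakly null. Once that is in place, the rest is a direct application of the definitions.
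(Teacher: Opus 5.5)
Your proof is correct and follows the paper's own argument: Lemma 2 and Theorem 7 of \cite{article.7} show that a norm bounded disjoint sequence is weakly null when $E^{\prime}$ is order continuous, and the DP$^{\star}$ property then gives $f_{n}(x_{n})\longrightarrow 0$ for every weak$^{\star}$ null $(f_{n})$. Your fallback for Step 1, the classical characterization of order continuity of $E^{\prime}$ through weakly null bounded disjoint sequences in $E$, is also valid.
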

	\begin{proof}
		
		Let $(x_{n})$ be a disjoint norm bounded sequence of $E$, by Lemma 2 and Theorem 7 of \cite{article.7}, $x_{n} \overset{w}{\longrightarrow} 0$, then for every w$^{\star}$-null sequence $(f_{n})$ in $E^{\prime}$, we have $f_{n}(x_{n}){\longrightarrow} 0$. This ends the proof.

	\end{proof}
	\begin{remark}
		
		The condition $E^{\prime}$ is order continuous in Proposition \ref{cou} can not be dropped. Indeed, $\ell^{1}$ has the Dunford-Pettis$^{\star}$ property but fails the disjoint weak$^{\star}$ Dunford-Pettis property.	
		
	\end{remark}	
	\begin{proposition}\label{lou}

		For a $\sigma$-Dedekind complete Banach lattice $E$. The disjoint weak$^{\star}$ Dunford-Pettis property and the uaw-w$^{\star}$DP property properties are equivalent.	
		
	\end{proposition}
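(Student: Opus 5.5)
Both properties are characterised through operators into $c_{0}$, so I would route the proof through Theorem \ref{mou}.

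For the direction from the uaw-w$^{\star}$DP property to the disjoint weak$^{\star}$ Dunford-Pettis property, no Dedekind completeness is needed. Let $(x_{n})$ be a norm bounded disjoint sequence in $E$. By Lemma 2 of \cite{article.7}, every disjoint sequence is uaw-null, so $x_{n}\overset{uaw}{\longrightarrow}0$. The uaw-w$^{\star}$DP property then gives $f_{n}(x_{n})\to 0$ for every weak$^{\star}$ null sequence $(f_{n})$ in $E^{\prime}$. This is exactly Definition \ref{kkkk}.

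For the converse, assume $E$ is $\sigma$-Dedekind complete with the disjoint weak$^{\star}$ Dunford-Pettis property. By Theorem \ref{mou}, $(1)\Rightarrow(3)$, every operator $T:E\to c_{0}$ is uaw-Dunford-Pettis. Now take a norm bounded uaw-null sequence $(x_{n})$ in $E$ and a weak$^{\star}$ null sequence $(f_{n})$ in $E^{\prime}$. Define $T:E\to c_{0}$ by $T(x)=(f_{k}(x))_{k}$. This map lands in $c_{0}$ because $(f_{k})$ is weak$^{\star}$ null. It is bounded because weak$^{\star}$ null sequences are norm bounded by the uniform boundedness principle. Since $T$ is uaw-Dunford-Pettis, $\|T(x_{n})\|_{\infty}\to 0$. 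Finally $|f_{n}(x_{n})|\le \sup_{k}|f_{k}(x_{n})|=\|T(x_{n})\|_{\infty}\to 0$, which is the uaw-w$^{\star}$DP property.

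The main obstacle is the converse direction, where we pass from disjoint sequences to arbitrary uaw-null sequences. I would not attempt this by hand, for example by extracting almost disjoint subsequences from uaw-null sequences. Instead the passage rests entirely on Theorem \ref{mou}, specifically the equivalence $(2)\Leftrightarrow(3)$ there (M-weakly compact versus uaw-Dunford-Pettis for operators into $c_{0}$), which comes from [\cite{article.usp}, Theorem 18]. The $\sigma$-Dedekind completeness hypothesis is used only to invoke that theorem. The remaining steps are routine: checking that $T$ maps into $c_{0}$ and is bounded, and the final inequality.
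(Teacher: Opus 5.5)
Your proof is correct and follows the paper's argument. One direction applies Theorem~\ref{mou} to the operator $x\mapsto (f_k(x))_k$ from $E$ into $c_0$, and the other uses that disjoint sequences are uaw-null (\cite[Lemma 2]{article.7}); you also make explicit the routine points the paper leaves tacit, namely that this operator is bounded with values in $c_0$ and that $|f_n(x_n)|\le \|T(x_n)\|_{\infty}$.
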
	
	
	\begin{proof}
		
		Suppose that $E$ has the disjoint weak$^{\star}$ Dunford-Pettis property, and let consider the operator $T : E \longrightarrow c_{0}$ defined by $T(x) = (f_{n}(x))_{n}$. By Theorem \ref{mou}, $T$ is uaw-Dunford-Pettis, then for every norm bounded $(x_{n})$ in $E$ such that $x_{n} \overset{uaw}{\longrightarrow} 0$, and every w$^{\star}$-null sequence $(f_{n})$ in $E^{\prime}$, $f_{n}(x_{n}){\longrightarrow} 0$. Hence $E$ has the uaw-w$^{\star}$DP property.

		For the converse, assume  that $E$ has the uaw-w$^{\star}$DP property and let $(x_{n})$ in $E$ be a norm bounded disjoint, by [\cite{article.7}, Lemma 2]  $x_{n} \overset{uaw}{\longrightarrow} 0$, then for every w$^{\star}$-null sequence $(f_{n})$ in $E^{\prime}$, $f_{n}(x_{n}){\longrightarrow} 0$ which ends the proof.
		
	\end{proof}		
	
		In the next result, we establish the relationship between the class of disjoint weak$^{\star}$ Dunford-Pettis operators and the L-weakly compact one.

	\begin{proposition}
	Let $E$ and $F$ a pair of Banach lattices such that $E'$ is order continuous. Each positive L-weakly compact operator $T:E\rightarrow F$ is disjoint weak$^{\star}$ Dunford-Pettis.		
\end{proposition}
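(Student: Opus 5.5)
Take a norm bounded disjoint sequence $(x_n)$ in $E$, say $\|x_n\|\le 1$, and a w$^{\star}$-null sequence $(f_n)$ in $F^{\prime}$. We must show $f_n(Tx_n)\to 0$. We will use two standard facts together. First, the image of the unit ball under an L-weakly compact operator lies in an L-weakly compact set. Second, on an L-weakly compact set, every w$^{\star}$-null sequence is uniformly small, in the manner of limited sets. This suggests aiming at the stronger conclusion that $T(B_E)$ is a limited subset of $F$. In the spirit of Proposition \ref{pr333}(4), that immediately gives
$$|f_n(Tx_n)|\le \sup_{y\in T(B_E)}|f_n(y)|\longrightarrow 0 .$$
However, L-weakly compact sets need not be limited in general; for example, take the identity on $\ell^1$ restricted to a singleton-type setting. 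So we should exploit the disjointness of $(x_n)$ and the hypothesis that $E^{\prime}$ is order continuous.

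The plan is as follows. Since $E^{\prime}$ is order continuous, Lemma 2 and Theorem 7 of \cite{article.7} (as used in the proof of Proposition \ref{pr3}) give $x_n\overset{w}{\longrightarrow}0$ and $|x_n|\overset{w}{\longrightarrow}0$. Because $T$ is positive, $|Tx_n|\le T|x_n|$, and $T|x_n|\overset{w}{\longrightarrow}0$ in $F$. We then use the characterization of L-weakly compact sets in \cite[Proposition 3.6.2]{book.2}. A set $A\subset F$ is L-weakly compact iff for every $\varepsilon>0$ there is $u\in F_+$ lying in the ideal $F^a$ with $A\subset[-u,u]+\varepsilon B_F$. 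In particular, $T(B_E)$ admits such a decomposition. Write $Tx_n=y_n+z_n$ with $|y_n|\le u$ and $\|z_n\|\le\varepsilon$. Then
$$|f_n(Tx_n)|\le |f_n(y_n)|+\varepsilon\sup_k\|f_n\|.$$
The second term is uniformly small, since w$^{\star}$-null sequences are norm bounded.

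The main obstacle is the term $f_n(y_n)$, where $y_n\in[-u,u]$ with $u\in F^a$. Our first attempt is to replace $y_n$ by a sequence that is controlled by $|Tx_n|$ and is weakly null. We may choose $y_n=(Tx_n\wedge u)\vee(-u)$, which satisfies $|y_n|\le |Tx_n|\wedge u\le T|x_n|\wedge u$. Then $y_n\overset{w}{\longrightarrow}0$: it is dominated by the weakly null sequence $T|x_n|$ inside the order interval $[-u,u]$, and $u\in F^a$ makes $[-u,u]$ weakly compact.

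It remains to show $f_n(y_n)\to0$ for a w$^{\star}$-null $(f_n)$ and a weakly null sequence $(y_n)$ in the order interval $[-u,u]$ with $u\in F^a$. For this, restrict $f_n$ to the principal ideal $F_u$, normed by $u$. That ideal is an AM-space with unit, so it has the Dunford–Pettis property. The sequence $(f_n|_{F_u})$ is w$^{\star}$-null there, and $(y_n)$ is weakly null in $F_u$ because the order interval is weakly compact and $u\in F^a$. We expect this last transfer of weak nullity into $F_u$ to be the delicate step. If that fails, we would instead try a direct argument: use $|f_n(y_n)|\le |f_n|(|Tx_n|\wedge u)$ and the order continuity of $E^{\prime}$, via $T^{\prime}$, to approximate.
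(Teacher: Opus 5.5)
Your reduction is sound up to the decomposition. The truncation $y_n=(Tx_n\wedge u)\vee(-u)$ works because $|Tx_n-y_n|=(|Tx_n|-u)^+$ is dominated by the $\varepsilon$-small part, and $y_n\overset{w}{\longrightarrow}0$ by domination. The final step, however, is a genuine gap.

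The claim you reduce to is false: namely, that $f_n(y_n)\to 0$ for every w$^{\star}$-null $(f_n)$ and every weakly null $(y_n)\subset[-u,u]$ with $u\in F^a$. Take $F=L^2[0,1]$, $u=\mathbf{1}$, and the Rademacher functions $y_n=f_n=r_n$. They lie in $[-u,u]$ and are weakly (hence w$^{\star}$) null, yet $f_n(y_n)=1$. The same example shows that $[-\mathbf{1},\mathbf{1}]$ is an L-weakly compact set that is not limited; your "singleton-type" example does not work, since finite sets are limited.

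The Dunford--Pettis argument breaks on both sides:
\begin{enumerate}
\item The DP property of $F_u\cong C(K)$ pairs weakly null sequences with \emph{weakly} null sequences in $(F_u)'$. W$^{\star}$-null ones do not suffice: $c=C(\omega+1)$ fails the DP$^{\star}$ property, since $e_n$ is weakly null, $e_n^{\ast}-\lim$ is w$^{\star}$-null, and their pairing is $1$.
\item Weak nullity in $F$ does not pass to $(F_u,\|\cdot\|_u)$. The $r_n$ are isometrically equivalent to the $\ell^1$-basis in $L^\infty[0,1]=F_u$.
\end{enumerate}

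Your fallback bound $|f_n(y_n)|\le |f_n|(T|x_n|\wedge u)$ is the right one. The missing input is positivity, not anything about $T'$: $w_n:=T|x_n|\wedge u$ is a \emph{positive} weakly null sequence in $[0,u]\subseteq F^a$, and such a sequence is norm null. To see this:
\begin{enumerate}
\item $F^a$ is a closed ideal with order continuous norm, and $(w_n)$ is weakly null in $F^a$ by Hahn--Banach.
\item Since $0\le w_n\wedge v\le w_n$ for $v\in (F^a)_+$, we get $w_n\overset{uaw}{\longrightarrow}0$ in $F^a$.
\item Hence $w_n\overset{un}{\longrightarrow}0$ by \cite[Theorem 4]{article.7}, i.e. $\|w_n\|=\|w_n\wedge u\|\to 0$.
\end{enumerate}
Then $\|Tx_n\|\le\|(T|x_n|-u)^+\|+\|w_n\|\le\varepsilon+\|w_n\|$, using $T|x_n|\in T(B_E)\subseteq[-u,u]+\varepsilon B_F$. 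As $\varepsilon$ is arbitrary, $\|Tx_n\|\to 0$. So $T$ is in fact M-weakly compact, and $f_n(Tx_n)\to 0$ for every norm bounded $(f_n)$.

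The paper reaches the same conclusion by contradiction on the dual side. It disjointifies the $|f_n|$ via \cite[Lemma 4.35]{book.1}, using $T|x_n|\overset{w}{\longrightarrow}0$ to make the cross terms small. It then applies the disjoint-sequence characterization of L-weakly compact sets \cite[Proposition 3.6.2]{book.2}. There too, only the norm boundedness of $(f_n)$ is actually used.
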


\begin{proof}	Assume by way of a contradiction that $T$ is not disjoint weak$^{\star}$ Dunford-Pettis, that is, there exist $(x_n)$ a bounded disjoint sequence in $E$ and $(f_{n})$ a sequence of $F'$ such that $f_n \overset{w^{\star}}{\longrightarrow} 0$ in $F'$ with  $f_{n}(Tx_{n})\nrightarrow 0$. Therefore there exists a subsequence of $(f_{n}(Tx_{n}))$ (which we can denote by $(f_{n}(Tx_{n}))$) and $\varepsilon>$ such that  $|f_{n}(Tx_{n})|>\varepsilon$, for all $n$. In particular, we infer from the inequality  $|f_{n}(Tx_{n})| \leq |f_{n}|(T|x_{n}|)$, that  $|f_{n}|(T|x_{n}|)|>\varepsilon$. According to  [\cite{article.7},Theorem 7]  $|x_n|\overset{w}{\longrightarrow} 0$, and hence $T(|x_n|)\overset{w}{\longrightarrow} 0$. Now, an easy inductive argument shows that there exists a subsequence $(z_n)$ of $(|x_n|)$ and subsequence $(g_n)$ of $(f_n)$ such that $$ |g_{n}|(T(z_n))|>\varepsilon $$
	
	and $$(4^n\sum_{i=1}^{n}
	|g_i
	|) (T(z_{n+1})) < \frac{1}{n}, \text{ for all $n\geq 1$.}$$
	we consider $h =\sum_{i=1}^{\infty} 2^{-n}
	|g_n|$ and $ h_n=(|g_{n+1}|-4^{n}\sum_{i=1}^{n}|g_i|-2^{-n}h)^+$
	[\cite{book.1}, Lemma 4.35 ] the sequence $(h_n)$ is disjoint. Since $0 \leq h_n \leq |g_{n+1}|$ for all
	$n \geq 1$ and $(g_n)$ is weak* null in $F$, then from [\cite{article.JJ}, Lemma 2.2] $(h_n)$ is weak* null in
	$F$, from the inequality
	
	\begin{align*}
		h_n(T (z_{n+1})) & \geq  |g_{n+1}|-4^{n}\sum_{i=1}^{n}|g_i|-2^{-n}h (T (z_{n+1})) \\	& \geq \varepsilon -\frac{1}{n} -2^{-n} h (T (z_{n+1}))				\end{align*}
	
	we see that $h_n(T (z_{n+1})) \geq \frac{\varepsilon}{2}$ must hold for all $n$ sufficiently large (because $2^{-n}h (T (z_{n+1}))\rightarrow 0$ ). Now, since  $T$ is an L-weakly compact operator, and by using [\cite{book.2}, Proposition 3.6.2] and the inequality  $$|h_{n}(Tz_{n+1})| \leq \sup_{x\in B_{E}}{|h_{n}|(|Tz_{n+1}|)|}= \sup_{x\in B_{E}}{|f_{n}|(Tz_{n+1})|}\longrightarrow 0$$ 
	Which shows a contradiction and finishes the proof.
\end{proof}

Recall that an order continuous Banach lattice $E$ is said to have the subsequence splitting property (SSP, shortly) if for any norm bounded
sequence $(x_n)$ there exist a subsequence $(x_{n_k})$ of $(x_n)$ and two sequences $(y_k)$ and $(z_k)$ such that
$x_{n_k}=y_k+z_k$, $(y_k)$ is L-weakly compact, $(z_k)$ is pairwise disjoint and $y_k \perp z_k$ for all $k$ (see \cite{FLO}).

In general, disjoint weak$^{\star}$ Dunford-Pettis  operator is not necessary  L-weakly compact. Indeed, $Id_{\ell^{\infty}}$ is disjoint weak$^{\star}$ Dunford-Pettis but it is not L-weakly compact.

In the following, we give necessary conditions for which each disjoint weak$^{\star}$ Dunford-Pettis  operator is L-weakly compact.

\begin{proposition}
	Let $E$ and $F$ a pair of Banach lattices, such that $E$ has the subsequence splitting property, and $F$ is order continuous and having the DP$^{\star}$ property. Then, each disjoint weak$^{\star}$ Dunford-Pettis operator $T:E\rightarrow F$ is L-weakly compact.		
\end{proposition}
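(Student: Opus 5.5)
We want: $T:E\to F$ disjoint weak$^{\star}$ Dunford-Pettis $\Rightarrow$ $T(B_E)$ is an L-weakly compact subset of $F$. We use the standard sequential characterization (cf. \cite[Proposition 3.6.2]{book.2}): it suffices to show that for every disjoint sequence $(f_n)$ in $B_{F'}$ we have $\sup_{x\in B_E}|f_n(Tx)|\to 0$, i.e. $\|T'f_n\|\to 0$. Since $F$ is order continuous, \cite[Corollary 2.4.3]{book.2} tells us that every norm bounded disjoint sequence $(f_n)\subset F'$ is weak$^{\star}$ null; this is the only way the disjoint weak$^{\star}$ Dunford-Pettis hypothesis can be brought into play.

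We argue by contradiction. Suppose $\|T'f_n\|\not\to 0$. Passing to a subsequence, we have $\|T'f_n\|>\varepsilon$, so we can pick $x_n\in B_E$ with $|f_n(Tx_n)|>\varepsilon$ for all $n$. Now we apply the subsequence splitting property of $E$ to $(x_n)$. Passing to a further subsequence, we write $x_n=y_n+z_n$, where $\{y_n\}$ is an L-weakly compact set, $(z_n)$ is pairwise disjoint and $y_n\perp z_n$. In particular $|z_n|\le |x_n|$, so $(z_n)$ is norm bounded. Then
\[
f_n(Tx_n)=f_n(Ty_n)+f_n(Tz_n).
\]
The disjoint part is handled directly by the hypothesis: $(z_n)$ is bounded and disjoint, and $(f_n)$ is weak$^{\star}$ null, so $f_n(Tz_n)\to 0$.

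The main obstacle is the L-weakly compact part, where we must show $f_n(Ty_n)\to 0$. We plan two routes and will take whichever works.
\begin{enumerate}
\item Use that $E$ is order continuous (part of the definition of SSP). Then the L-weakly compact set $\{y_n\}$ is relatively weakly compact, so after a subsequence $y_n\overset{w}{\longrightarrow}y$, hence $Ty_n\overset{w}{\longrightarrow}Ty$. We then invoke the DP$^{\star}$ property of $F$: weakly null $(Ty_n-Ty)$ against the weak$^{\star}$ null $(f_n)$ gives $f_n(Ty_n-Ty)\to 0$, and $f_n(Ty)\to 0$ by weak$^{\star}$ nullity. This is where the DP$^{\star}$ hypothesis on $F$ enters.
\item Alternatively, use the disjointness of $(f_n)$ together with the fact that $T(\{y_n\})$ lies (up to $\varepsilon$) in an order interval $[-u,u]$ of the order continuous space $F$. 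Then $|f_n|(u)\to 0$ because disjoint bounded sequences in $F'$ converge to zero on order intervals when $F$ is order continuous. This would, however, need $T$ to map L-weakly compact sets to almost order bounded sets, which is not automatic, so route 1 is preferred.
\end{enumerate}

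Combining the two parts gives $f_n(Tx_n)\to 0$, which contradicts $|f_n(Tx_n)|>\varepsilon$. Hence $\|T'f_n\|\to 0$ for every disjoint sequence $(f_n)$ in $B_{F'}$, and $T$ is L-weakly compact.
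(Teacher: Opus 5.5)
Your route 1 is essentially the paper's proof: reduce via \cite[Proposition 3.6.2]{book.2} to norm bounded disjoint $(f_n)\subset F'$, which are weak$^{\star}$ null because $F$ is order continuous; split $x_n=y_n+z_n$ by the SSP; handle the disjoint part with the disjoint weak$^{\star}$ Dunford-Pettis hypothesis; and handle the part $y_n\overset{w}{\longrightarrow}y$ with the DP$^{\star}$ property of $F$. The differences are minor: you test against $f_n$ directly rather than against $g_n$ with $|g_n|\le|f_n|$, and you add the justification that $|z_n|\le|x_n|$ keeps $(z_n)$ bounded, which the paper leaves implicit. Route 2 is not needed.
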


\begin{proof}
	To show that $T:E\rightarrow F$ is L-weakly compact operator, we need to prove that $T(B_{E})$ is L-weakly compact set, so by  [\cite{book.2}, Proposition 3.6.2] it suffices to show that $$\sup_{x\in B_{E}}{|f_{n}|(|Tx|)|}\rightarrow 0$$ for every norm bounded disjoint sequence  $(f_{n}) \subset (F^{\prime})$. Assume by way of contradiction that $\displaystyle\sup_{x\in B_{E}}{|f_{n}|(|Tx|)|}\nrightarrow 0$  for some disjoint sequence $(f_{n})$ in $F^{\prime}$, that is, there exists $\varepsilon>0$ such that $\sup_{x\in B_{E}}{|f_{n}|(|Tx|)|}> \varepsilon$ for all $n$. Therefore, for each $n$ we choose some $(x_n)\in B_{E}$ and some $(g_{n}) \subset (F^{\prime})$ with $|g_n|\leq |f_{n}|$, and such that $|g_{n}(Tx_n)|> \varepsilon$. Now, since $E$ has the SPP, we can find a subsequence $(x_{n_{k}})$ and terms $(y_{k}),(z_{k})$ such that: $x_{n_{k}}=y_{k}+z_{k}$
	
	$\bullet$	$(y_{k})$ is L-weakly compact sequence, hence relativey weakly compact.
	
	$\bullet$  \((z_{k})\) is a disjoint sequence. 
	
	So, there exists a subsequence of $(y_{k})$, that we denote without loss of generality by $(y_{k})$, and  $y\in E$ such that $y_{k}-y \overset{w}{\rightarrow} 0$. On the other hand, by \cite[Corollary 2.4.3]{book.2}, it follows that $g_{n} \overset{w^{\star}}{\longrightarrow} 0$. By  hypothesis $T$ is disjoint weak$^{\star}$ Dunford-Pettis, hence $|g_{k}(Tz_{k})| \rightarrow 0$. Now, as $F$ has the DP* property it follows that $|g_{k}(T(y_{k}-y ))| \rightarrow 0$, and we have also $|g_{k}(T(y ))| \rightarrow 0$, as $k\rightarrow\infty$, combining all and from the inequality $$|g_{k}(Tx_k)|\leq |g_{k}(T(y_{k}-y ))|+ |g_{k}(T(y ))|+ |g_{k}(Tz_{k})|\longrightarrow 0$$
	we conclude that $|g_{k}(Tx_k)|\longrightarrow 0$, this contradicts $|g_{k}(Tx_k)|> \varepsilon$. Hence,  $T(B_{E})$ is an L-weakly compact set of $F$, and this completes the proof.

\end{proof}
Note that a $\sigma$-uaw-compact (hence $\sigma$-un-compact) subset of a Banach lattice $E$ is not necessary limited. We observe that by considering the unit ball  $B_{\ell^{1}}$ of $\ell^{1}$ which is $\sigma$-un-compact, and hence $\sigma$-uaw-compact, by [\cite{article.4}, Theorem 7.5], but fails to be limited. However, we have the following result:

\begin{theorem} \label{uaw-limited}
	let $E$ and $F$ a pair of Banach lattices, such that $F$ has the DP* property. The following are equivalent:
	\begin{enumerate}
		\item $T:E\rightarrow F$ is disjoint weak$^{\star}$ Dunford-Pettis.
		\item  $T$ carries $\sigma$-uaw-compact subsets of $E$ into limited subsets of $F$
	\end{enumerate}

\end{theorem}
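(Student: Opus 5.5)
The direction $(2)\Longrightarrow(1)$ is the easy one, and I would do it first. Let $(x_n)$ be a norm bounded disjoint sequence in $E$ and let $f_n\overset{w^{\star}}{\longrightarrow}0$ in $F^{\prime}$. By Lemma 2 of \cite{article.7}, $x_n\overset{uaw}{\longrightarrow}0$, so every subsequence of $(x_n)$ is uaw-null. Hence $W=\{x_n:n\in\mathbb{N}\}$ is relatively $\sigma$-uaw-compact. By $(2)$, $T(W)$ is limited in $F$, which means $\sup_k|f_n(Tx_k)|\to 0$. Since $|f_n(Tx_n)|\leq\sup_k|f_n(Tx_k)|$, exactly as in $(4)\Longrightarrow(1)$ of Proposition \ref{pr333}, we get $f_n(Tx_n)\to0$.

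For $(1)\Longrightarrow(2)$, let $W$ be $\sigma$-uaw-compact and $f_n\overset{w^{\star}}{\longrightarrow}0$ in $F^{\prime}$. Limitedness of $T(W)$ is equivalent to $f_n(Tx_n)\to0$ for every sequence $(x_n)\subset W$. I would argue by contradiction. Suppose $|f_n(Tx_n)|>\varepsilon$ along a subsequence. Using $\sigma$-uaw-compactness, pass to a further subsequence with $x_n\overset{uaw}{\longrightarrow}x$. Then $f_n(Tx)\to0$ because $(f_n)$ is weak$^{\star}$ null, so it suffices to treat the bounded uaw-null sequence $y_n=x_n-x$. Splitting into positive and negative parts (Lemma 1 of \cite{article.7}), we may assume $y_n\geq0$.

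Next I would split $y_n$ into a weakly null piece and an almost disjoint piece. For $u\in E^+$ write
$$y_n=(y_n\wedge u)+(y_n-u)^+.$$
By uaw-convergence, $y_n\wedge u\overset{w}{\longrightarrow}0$, so $T(y_n\wedge u)\overset{w}{\longrightarrow}0$. The DP$^{\star}$ property of $F$ then gives $f_n(T(y_n\wedge u))\to0$. Here the DP$^{\star}$ hypothesis enters, and the rest must be handled by disjointness. Following the inductive scheme already used in the proof of the L-weakly compact proposition, I would choose a subsequence $(y_{n_k})$ and set
$$u_k=4^k\sum_{i<k}y_{n_i},\qquad d_k=(y_{n_k}-u_k-2^{-k}v)^+,\quad v=\sum_i 2^{-i}y_{n_i}.$$
By [\cite{book.1}, Lemma 4.35] the $d_k$ are pairwise disjoint and bounded. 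By $(1)$, $f_{n_k}(Td_k)\to0$.

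The remaining error $y_{n_k}-(y_{n_k}\wedge u_k)-d_k$ lies in $[0,\,2^{-k}v]$, so it tends to $0$ in norm, and its contribution vanishes. The main obstacle is that the first piece uses the moving truncation $u_k$ instead of a fixed $u$. That $y_{n_k}\wedge u_k$ is still weakly null (so that DP$^{\star}$ applies) must come from the inductive choice of the subsequence. I would choose $n_k$ so that $|g|(y_{n_k}\wedge u_k)$ is small for finitely many test functionals. If this diagonal control fails in general, I would fall back on the un/uaw disjoint-perturbation results of \cite{article.4}, which give a disjoint sequence close to a subsequence.
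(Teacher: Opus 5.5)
Your $(2)\Longrightarrow(1)$ is correct and is the paper's argument. For $(1)\Longrightarrow(2)$, several steps are also correct:
the reduction to a bounded positive uaw-null $(y_n)$;
the decomposition $y_{n_k}=(y_{n_k}\wedge u_k)+r_k+d_k$, with $d_k=(y_{n_k}-u_k-2^{-k}v)^+$ pairwise disjoint (your constants work, since $4^l\geq 2^{k+l}$ for $l>k$) and $0\leq r_k\leq 2^{-k}v$;
and the use of $(1)$ on $(d_k)$.

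The gap is the step you flag yourself. As written, you need $(T(y_{n_k}\wedge u_k))_k$ to be weakly null before DP$^{\star}$ applies, and neither of your suggestions gives that. Making $|g|(y_{n_k}\wedge u_k)$ small for finitely many functionals at each stage only gives convergence against the countably many functionals you fixed. That does not imply weak nullity: the unit vectors of $\ell^1$ tend to $0$ against every coordinate functional without being weakly null. Your fallback does not help either. The disjoint-perturbation theorem for un-null sequences (e.g. [\cite{article.8}, Theorem 3.2]) produces a norm-small perturbation, but a uaw-null sequence need not be un-null outside order continuous spaces; $(e_n)$ in $\ell^\infty$ is an example. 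The paper bridges exactly this point by citing Theorem 8.11 of \cite{Thesis-ms}. That result gives a subsequence and a disjoint $(d_k)$ with $x_{n_k}-d_k\overset{w}{\longrightarrow}0$, after which DP$^{\star}$ and $(1)$ finish the proof.

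The missing idea is that you never need weak nullity of the moving-truncation sequence. You only need the scalars $f_{n_k}(T(y_{n_k}\wedge u_k))$ to tend to $0$, and this can be built into the inductive choice.
\begin{itemize}
\item The element $u_k=4^k\sum_{i<k}y_{n_i}$ depends only on $n_1,\dots,n_{k-1}$.
\item For this fixed $u_k$ you have $y_n\wedge u_k\overset{w}{\longrightarrow}0$, hence $T(y_n\wedge u_k)\overset{w}{\longrightarrow}0$. DP$^{\star}$ then gives $f_n(T(y_n\wedge u_k))\to0$ as $n\to\infty$.
\item So choose $n_k>n_{k-1}$ with $|f_{n_k}(T(y_{n_k}\wedge u_k))|<1/k$.
\item Also $|f_{n_k}(Tr_k)|\leq\|T\|\sup_n\|f_n\|\,2^{-k}\|v\|\to0$, and $f_{n_k}(Td_k)\to0$ by $(1)$. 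Together these contradict $|f_{n_k}(Ty_{n_k})|>\varepsilon$.
\end{itemize}
With this one change your argument is complete. It is also more self-contained than the paper's: DP$^{\star}$ is only ever applied at a fixed truncation level, so [\cite{book.1}, Lemma 4.35] replaces the appeal to the thesis result.
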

\begin{proof}
$1\Longrightarrow 2$ Let $A$ be a $\sigma$-uaw-compact subset of $E$, we need to show that $T(A)$ is limited subset of $F$. To this end, let  $f_{n} \overset{w^{\star}}{\longrightarrow} 0$ in $F'$, let $y_n\in T(A)$, that is, there exists $x_n\in A$ such that $y_n=T(x_n)$. Since $A$ is  $\sigma$-uaw-compact, there exists $(x_{n_k})$ a subsequence of $(x_n)$ and $x\in E$ such that $x_{n_k} \overset{uaw}{\longrightarrow} x$, we may assume that $x=0$. By Theorem  8.11 \cite{Thesis-ms}, there exists $(x_{n_{k'}})$ a subsequence of $(x_{n_k})$ a subsequence and a disjoint sequence $(d_{k'})$ satisfying $x_{n_{k'}}-d_{k'} \overset{w}{\longrightarrow}0$, therefore $T(x_{n_{k'}}-d_{k'} )\overset{w}{\longrightarrow}0$. From one hand, as $F$ has DP* property,  $f_{k'}(T(x_{n_{k'}}-d_{k'} )) \longrightarrow 0$. On the other hand, by hypothesis  $T$ is   disjoint weak$^{\star}$ Dunford-Pettis, that yields $f_{k'}(Td_{k'}) \longrightarrow 0$. Hence $f_{k'}(Tx_{n_{k'}}) \longrightarrow 0$. By the same argument, we showed that every subsequence of $(f_{n}(Tx_{n}))$ has a further subsequence which converges to zero, therefore $f_{n}(Tx_{n}) \longrightarrow 0$, which proves that $T(A)$ is limited subset .

 $2\Longrightarrow 1$ Let $(x_n)$ be a disjoint sequence of $E$ and $f_{n} \overset{w*}{\longrightarrow} 0$ in $F'$, we consider the subset $W=\{{x_{n},n\in \mathbb{N}\}} \cup\{0\} $, clearly $W$ is $\sigma$-uaw-compact subset of $E$, so by hypothesis $T(W)$ is limited in $F$. Therefore, $f_{n}(Tx_{n}) \longrightarrow 0$, this shows that $T$ is disjoint weak$^{\star}$ Dunford-Pettis.
	
\end{proof}

\begin{proposition}
	Let $E$ and $F$ be a pair of Banach lattices, such that $E$ is order continuous. The following are equivalent:
	\begin{enumerate}
		\item $T:E\rightarrow F$ is disjoint weak$^{\star}$ Dunford-Pettis.
		\item  $T$ carries $\sigma$-un-compact subsets of $E$ into limited subsets of $F$
	\end{enumerate}
	
\end{proposition}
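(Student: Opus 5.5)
The plan mirrors the proof of Theorem \ref{uaw-limited}, with the hypothesis ``$F$ has the DP$^{\star}$ property'' replaced by ``$E$ is order continuous''. The extra hypothesis is used in one place: it lets us replace an un-null sequence by a disjoint sequence up to a \emph{norm}-null error, instead of only a weakly null error.

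\emph{$(2)\Longrightarrow(1)$.} Let $(x_n)$ be a norm bounded disjoint sequence in $E$ and let $f_n\overset{w^{\star}}{\longrightarrow}0$ in $F'$. Put $W=\{x_n: n\in\mathbb{N}\}\cup\{0\}$. Since $E$ is order continuous, a disjoint bounded sequence is un-null, because $|x_n|\wedge u\to 0$ in norm for every $u\in E^+$. The same holds for every subsequence of $(x_n)$. Hence every sequence in $W$ has either a constant subsequence or a subsequence of $(x_n)$, which is un-null, so $W$ is $\sigma$-un-compact. By (2), $T(W)$ is limited. Therefore
$$|f_n(Tx_n)|\le \sup_{y\in T(W)}|f_n(y)|\to 0,$$
so $T$ is disjoint weak$^{\star}$ Dunford-Pettis.

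\emph{$(1)\Longrightarrow(2)$.} Let $A$ be a $\sigma$-un-compact subset of $E$, let $f_n\overset{w^{\star}}{\longrightarrow}0$ in $F'$, and let $x_n\in A$. It suffices to show $f_n(Tx_n)\to 0$, since limitedness of $T(A)$ amounts to $\sup_{a\in A}|f_n(Ta)|\to0$, and this follows by choosing near-maximizers $x_n\in A$. As usual, it is enough to show that every subsequence of $(f_n(Tx_n))$ has a further subsequence tending to zero.

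By $\sigma$-un-compactness, pass to a subsequence with $x_{n_k}\overset{un}{\longrightarrow}x$ for some $x\in E$. The constant part is harmless: $f_{n_k}(Tx)\to0$ because $(f_{n})$ is weak$^{\star}$ null. So we may assume $x=0$, i.e.\ we have a bounded un-null sequence $(x_{n_k})$.

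The key step is the structure theorem for un-null sequences in order continuous Banach lattices (Deng--O'Brien--Troitsky; cf.\ \cite{article.4}). It gives a further subsequence $(x_{n_{k'}})$ and a disjoint sequence $(d_{k'})$ with
$$\|x_{n_{k'}}-d_{k'}\|\to0.$$
Then we split
$$|f_{k'}(Tx_{n_{k'}})|\le \sup_m\|f_m\|\,\|T\|\,\|x_{n_{k'}}-d_{k'}\| + |f_{k'}(Td_{k'})|.$$
The first term tends to zero because a weak$^{\star}$ null sequence is norm bounded by the uniform boundedness principle. The second term tends to zero by (1), applied to the bounded disjoint sequence $(d_{k'})$; boundedness follows since $d_{k'}$ is norm close to $x_{n_{k'}}$. (Here the indices of $(f_n)$ are matched along the subsequence, exactly as in the proof of Theorem \ref{uaw-limited}.) This gives the required sub-subsequence tending to zero.

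The main obstacle is justifying the disjoint norm-approximation. If the cited structure result is not available, I would construct it directly: take a quasi-interior point or a suitable sequence of positive elements, and, as in Theorem 8.11 of \cite{Thesis-ms}, use a diagonal Kadec--Pełczyński type argument. The order continuity of $E$ is exactly what upgrades the weak approximation obtained there to a norm approximation.
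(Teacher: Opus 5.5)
Your proof is correct and follows the paper's argument essentially step for step. For $(2)\Rightarrow(1)$, both use the un-nullness of disjoint sequences in an order continuous $E$, applied to $W=\{x_n\}\cup\{0\}$. For $(1)\Rightarrow(2)$, both use the Deng--O'Brien--Troitsky almost-disjoint subsequence theorem together with the subsequence-of-a-subsequence argument; that theorem is \cite{article.8}, Theorem 3.2, not \cite{article.4}.

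Your write-up is slightly more careful than the paper's. You justify the reduction to $x=0$, the boundedness of $(d_{k'})$, and the passage from $f_n(Tx_n)\to0$ to uniform convergence on $T(A)$. One small correction: your opening claim that order continuity is used in only one place is inaccurate, since you also use it in $(2)\Rightarrow(1)$.
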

\begin{proof}
$1\Longrightarrow 2$ Let $A$ be a $\sigma$-un-compact subset of $E$, we need to show that $T(A)$ is limited subset of $F$. Let consider,  $f_{n} \overset{w*}{\longrightarrow} 0$ in $F'$, and $y_n\in T(A)$. Since $A$ is $\sigma$-$un$-compact, there exist $(x_{n_k})$ a subsequence of $(x_n)$ and $x\in E$ such that $x_{n_k} \overset{un}{\longrightarrow} x$, we may assume that $x=0$. By [\cite{article.8}, Theorem 3.2] there exists $(x_{n_{k'}})$ a subsequence of $(x_{n_k})$  and a disjoint sequence $d_{k'}$ satisfying $\|x_{n_{k'}}-d_{k'} \|\longrightarrow0$. Consequently, we have $\|T(x_{n_{k'}}-d_{k'}) \|\longrightarrow0$, which implies 
 $f_{k'}(T(x_{n_{k'}}-d_{k'} )){\longrightarrow}0$. 
By hypothesis, $T$ is disjoint weak$^{\star}$ Dunford-Pettis, that yields $f_{k'}(Td_{k'}) \longrightarrow 0$. Hence $f_{k'}(Tx_{n_{k'}}) \longrightarrow 0$. By the same argument, we showed that every subsequence of $(f_{n}(Tx_{n}))$ has a further subsequence which converges to zero, therefore $f_{n}(Tx_{n})\longrightarrow 0$.

$2\Longrightarrow 1$ Let $(x_n)$ be a disjoint sequence of $E$, it follows from [\cite{article.7}, Lemma 2 and Theorem 4] that  $W=\{{x_{n},n\in \mathbb{N}\}} \cup\{0\} $, is $\sigma$-un-compact subset of $E$, by hypothesis $T(W)$ is limited in $F$. Therefore, for an arbitrary sequence $(f_{n})$ in $F'$ such that $f_{n} \overset{w^{\star}}{\longrightarrow} 0$, we obtain  $f_{n}(Tx_{n}) \longrightarrow 0$, which shows that $T$ is disjoint weak$^{\star}$ Dunford-Pettis.
	
\end{proof}

\begin{proposition} \label{characterization}
	Let $E$ and $F$ be a pair of Banach lattices, such that $E'$ is order continuous and let $T:E\rightarrow F$ be a positive operator. The following are equivalent:
	\begin{enumerate}
		\item $T$ is disjoint weak$^{\star}$ Dunford-Pettis.
		\item  For every disjoint weakly null sequence $(x_n)$ in $E^+$, and every weak* null
		sequence $(f_n)$ in $E'$, it follows that $f_{n}(Tx_{n})\longrightarrow 0$.
	\end{enumerate}	
\end{proposition}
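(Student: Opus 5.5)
The plan is to show (1) $\Rightarrow$ (2) directly from the definition, and (2) $\Rightarrow$ (1) by reducing an arbitrary norm bounded disjoint sequence to positive disjoint sequences that are weakly null. (Note that in (2) the functionals must live in $F'$, since $f_n(Tx_n)$ only makes sense there.)

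For (1) $\Rightarrow$ (2): a disjoint weakly null sequence in $E^+$ is norm bounded, because weakly convergent sequences are bounded. Since $T$ is disjoint weak$^{\star}$ Dunford-Pettis, $f_n(Tx_n)\to 0$ for every weak$^{\star}$ null $(f_n)$ in $F'$. Nothing more is needed.

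For (2) $\Rightarrow$ (1), let $(x_n)$ be a norm bounded disjoint sequence in $E$ and let $f_n\overset{w^{\star}}{\longrightarrow}0$ in $F'$.
\begin{itemize}
\item Write $x_n=x_n^+-x_n^-$. The sequences $(x_n^+)$ and $(x_n^-)$ are disjoint, positive and norm bounded, since $x_n^{\pm}\le |x_n|$ and disjointness passes to smaller elements.
\item Because $E'$ is order continuous, every norm bounded disjoint sequence in $E$ is weakly null. This is the same argument used in Proposition \ref{cou} and Proposition \ref{pr3}: by Lemma 2 of \cite{article.7} the sequence is uaw-null, and then Theorem 7 of \cite{article.7} gives weak nullity. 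Hence $(x_n^+)$ and $(x_n^-)$ are disjoint weakly null sequences in $E^+$.
\item Applying (2) to each of them gives $f_n(Tx_n^+)\to 0$ and $f_n(Tx_n^-)\to 0$.
\item By linearity, $f_n(Tx_n)=f_n(Tx_n^+)-f_n(Tx_n^-)\to 0$, so $T$ is disjoint weak$^{\star}$ Dunford-Pettis.
\end{itemize}

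Positivity of $T$ is not really needed in this route. It would only matter if one tried to use $|x_n|$ and the estimate $|f_n(Tx_n)|\le|f_n|(T|x_n|)$, but the weak$^{\star}$ nullity of $(|f_n|)$ is not available, so the splitting approach avoids that issue.

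The only substantive step is the implication that disjoint norm bounded sequences are weakly null when $E'$ is order continuous. I would cite \cite{article.7} (Lemma 2 together with Theorem 7) exactly as in the earlier propositions. Alternatively, this is the standard characterization of order continuity of $E'$ found in \cite{book.2}.
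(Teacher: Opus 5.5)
Your proof is correct and follows the paper's argument: split $x_n=x_n^+-x_n^-$, use order continuity of $E'$ (the paper cites Theorem 2.4.14 of Meyer-Nieberg directly) to see that both parts are disjoint weakly null sequences in $E^+$, and apply (2) to each. Your side remarks are also accurate: the functionals in (2) must lie in $F'$, and positivity of $T$ is never used.
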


\begin{proof}
	$(1)\Longrightarrow (2)$  is obvious.

	 For $(2)\Longrightarrow (1)$ Let $(x_n)$ be a disjoint sequence of $E$ and  $f_{n} \overset{w*}{\longrightarrow} 0$ in $F'$. Both  $(x_n^{+})$ and $(x_n^{-})$ are positive disjoint sequences. By Theorem 2.4.14 of \cite{book.2} $x_n^{+}\overset{w}{\longrightarrow} 0$ and $x_n^{-}\overset{w}{\longrightarrow} 0$. Hence, by hypothesis  we obtain $f_{n}(Tx_{n})=f_{n}(Tx_n^{+})- f_{n}(Tx_n^{-})\longrightarrow 0$.
\end{proof}

Let recall from \cite{Ratbi} that an operator $T$ from a Banach lattice $E$ to a Banach lattice $F$  called is almost weak* Dunford-Pettis, if $f_{n}(Tx_{n}) \longrightarrow 0$ for every weakly null sequence
$(x_n)$ in $E$ consisting of pairwise disjoint terms, and for every weak* null sequence $(f_n)$ in $F^{\prime}$ consisting of pairwise disjoint terms.

It easily observed that each disjoint weak$^{\star}$ Dunford-Pettis operator is almost weak* Dunford-Pettis. However the converse is not true in general. For instance, the identity operator $Id_{\ell^1}$ of $\ell^1$ is almost weak* Dunford-Pettis, but fails to be disjoint weak$^{\star}$ Dunford-Pettis.

As consequence of the Proposition above and [\cite{Ratbi},Theorem 2.3] we obtain the relationship between 
 both classes.
\begin{corollary}
	let $E$ and $F$ a pair of Banach lattices, such that $E'$ is order continuous and $F$ is $\sigma$-Dedekind complete. For every positive operator $T$ from $E$ into $F$, the following assertions are equivalent:
	\begin{enumerate}
		\item $T$ is  almost weak* Dunford-Pettis operator.
		\item  $T$ is disjoint weak$^{\star}$ Dunford-Pettis operator
	\end{enumerate}	
	
\end{corollary}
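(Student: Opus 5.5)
(2)$\Longrightarrow$(1) is immediate. If $T$ is disjoint weak$^{\star}$ Dunford-Pettis, take a weakly null disjoint sequence $(x_n)$ in $E$. Because its terms are disjoint it is norm bounded, so $f_n(Tx_n)\to 0$ for every weak$^{\star}$ null $(f_n)$ in $F'$, and in particular for every disjoint one. Hence $T$ is almost weak$^{\star}$ Dunford-Pettis. This direction needs none of the hypotheses on $E$, $F$ or positivity.

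For (1)$\Longrightarrow$(2) I would combine Proposition \ref{characterization} with [\cite{Ratbi}, Theorem 2.3]. By Proposition \ref{characterization}, which applies since $E'$ is order continuous and $T$ is positive, it suffices to show the following: for every disjoint weakly null sequence $(x_n)$ in $E^+$ and every weak$^{\star}$ null sequence $(f_n)$ in $F'$, we have $f_n(Tx_n)\to 0$. The functionals in that proposition should be read in $F'$, as in its proof.

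The whole difficulty is to drop the disjointness of $(f_n)$. This is where $\sigma$-Dedekind completeness of $F$ enters, and I would use [\cite{Ratbi}, Theorem 2.3] for it. As I recall, that result says that for positive operators into a $\sigma$-Dedekind complete $F$, almost weak$^{\star}$ Dunford-Pettis is equivalent to $f_n(Tx_n)\to 0$ for disjoint weakly null $(x_n)\subset E^+$ and arbitrary weak$^{\star}$ null $(f_n)$. If that is the form, the implication follows at once.

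If the cited theorem is weaker, I would prove the reduction directly, following the proof of Theorem \ref{mou}. Argue by contradiction, assuming $|f_n(Tx_n)|>\varepsilon$ for all $n$. Since $Tx_n\ge 0$, we have $|f_n|(Tx_n)>\varepsilon$. The sequence $(Tx_n)$ is weakly null, so an inductive extraction gives subsequences $(z_n)$ of $(x_n)$ and $(g_n)$ of $(f_n)$ with $\big(4^n\sum_{i\le n}|g_i|\big)(Tz_{n+1})<1/n$. Set $h=\sum 2^{-n}|g_n|$ and $h_n=\big(|g_{n+1}|-4^n\sum_{i\le n}|g_i|-2^{-n}h\big)^+$. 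By [\cite{book.1}, Lemma 4.35] the $h_n$ are disjoint, and $0\le h_n\le|g_{n+1}|$. By [\cite{article.JJ}, Lemma 2.2], which is where the $\sigma$-Dedekind completeness of $F$ is needed, the sequence $(h_n)$ is weak$^{\star}$ null. Then $h_n(Tz_{n+1})\ge \varepsilon-1/n-2^{-n}h(Tz_{n+1})\ge\varepsilon/2$ for large $n$.

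The step I expect to be hardest is getting back from the positive parts $h_n$, which dominate only $|g_{n+1}|$, to signed disjoint weak$^{\star}$ null functionals. For this I would use [\cite{book.1}, Ex.22, p.77] or [\cite{book.1}, Theorem 1.23]: pick $k_n$ with $|k_n|\le h_n$ and $k_n(Tz_{n+1})=h_n(Tz_{n+1})$, keeping disjointness and the weak$^{\star}$ null property via the same lemma. This contradicts the almost weak$^{\star}$ Dunford-Pettis property, since $(z_{n+1})$ is disjoint and weakly null. If that fails, the disjoint sequence $h_n\ge 0$ itself already serves as a disjoint weak$^{\star}$ null sequence of functionals, because the definition of almost weak$^{\star}$ Dunford-Pettis does not require signs.
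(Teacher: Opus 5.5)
Your proposal follows the paper's proof. You get (2)$\Longrightarrow$(1) straight from the definitions and (1)$\Longrightarrow$(2) by feeding [\cite{Ratbi}, Theorem 2.3] into Proposition~\ref{characterization}, read with functionals in $F'$. Your fallback argument is not needed; it only reproduces the $4^n$/$2^{-n}$ disjointification behind that theorem, and there $(h_n)$ itself is already a disjoint weak$^{\star}$ null sequence, so no signed functionals are required.

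One correction in (2)$\Longrightarrow$(1): a weakly null sequence is norm bounded by the uniform boundedness principle, not because its terms are disjoint. For example, $(ne_n)$ in $\ell^1$ is disjoint but unbounded.
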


\begin{proof}
	$(2)\Longrightarrow (1)$ It is clear that each disjoint weak$^{\star}$ Dunford-Pettis operator is almost weak* Dunford-Pettis. 
    
 $(1)\Longrightarrow (2)$   If $T$ is almost weak* Dunford-Pettis, it follows from Theorem 2.3 \cite{Ratbi} that for every disjoint weakly null sequence $(x_n)$ in $E^+$, and every weak* null sequence $(f_n)$ in $E'$,  $f_{n}(Tx_{n})\longrightarrow 0$. We claim by Proposition \ref{characterization} that $T$ is a disjoint weak$^{\star}$ Dunford-Pettis operator.
\end{proof}	

\begin{corollary}
	For a   $\sigma$-Dedekind complete Banach lattice $E$ such that $E^{\prime}$ is a order continuous. Then,
	$E$ has the weak DP* property if and only if $E$ has the disjoint weak*  Dunford-Pettis property.
\end{corollary}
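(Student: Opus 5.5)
The equivalence will be obtained by passing through the identity operator, combining the preceding corollary (applied to $T=Id_E$) with Proposition \ref{pr333}. Recall that $E$ has the weak DP$^{\star}$ property when $f_n(x_n)\to 0$ for every weakly null sequence $(x_n)$ consisting of pairwise disjoint terms and every weak$^{\star}$ null sequence $(f_n)$ in $E'$ (the disjoint, weakly null version of DP$^{\star}$). In \cite{Ratbi} this property is characterized by: the identity operator $Id_E$ is almost weak$^{\star}$ Dunford-Pettis. Under $\sigma$-Dedekind completeness of $E$, Theorem 2.3 of \cite{Ratbi} reduces it further to disjoint weakly null sequences in $E^{+}$ tested against arbitrary weak$^{\star}$ null sequences.

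For the direction ``disjoint weak$^{\star}$ DP property $\Rightarrow$ weak DP$^{\star}$ property'', I would argue directly from Definition \ref{kkkk}. A weakly null sequence is norm bounded by the uniform boundedness principle. So if $(x_n)$ is weakly null and disjoint and $f_n\overset{w^{\star}}{\longrightarrow}0$, Definition \ref{kkkk} immediately gives $f_n(x_n)\to 0$. No hypothesis on $E$ is needed for this direction.

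For the converse, assume $E$ has the weak DP$^{\star}$ property. Then $Id_E$ is almost weak$^{\star}$ Dunford-Pettis, since disjoint weak$^{\star}$ null functionals are in particular weak$^{\star}$ null. Apply the preceding corollary with $F=E$ and $T=Id_E$, which is positive. Its hypotheses hold exactly: $E'$ is order continuous and $E$ is $\sigma$-Dedekind complete. Hence $Id_E$ is disjoint weak$^{\star}$ Dunford-Pettis.

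Alternatively, one can avoid \cite{Ratbi} and use Proposition \ref{characterization} directly. Since $E'$ is order continuous, every disjoint norm bounded sequence is weakly null by [\cite{article.7}, Lemma 2, Theorem 7], and so are its positive and negative parts. Then $f_n(x_n)=f_n(x_n^{+})-f_n(x_n^{-})\to 0$ by the weak DP$^{\star}$ property, because $(x_n^{+})$ and $(x_n^{-})$ are disjoint and weakly null. Finally, Proposition \ref{pr333} $(3)\Rightarrow(1)$ gives that $E$ has the disjoint weak$^{\star}$ Dunford-Pettis property.

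The only delicate point is making sure the definition of the weak DP$^{\star}$ property used allows arbitrary, not necessarily disjoint, weak$^{\star}$ null $(f_n)$. If the cited definition instead requires disjoint $(f_n)$, the reduction to arbitrary $(f_n)$ is exactly what [\cite{Ratbi}, Theorem 2.3] provides under $\sigma$-Dedekind completeness, via the preceding corollary. This is why that hypothesis appears, and the route through the corollary handles it.
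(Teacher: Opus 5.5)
\textbf{There is a genuine gap.} Your proof works only for the definition of the weak DP$^{\star}$ property that you wrote down, and that is not the definition the statement refers to. In \cite{article.JJ}, which is the notion \cite{Ratbi} uses, $E$ has the weak DP$^{\star}$ property if every relatively weakly compact subset of $E$ is almost limited. That is, $f_n(x_n)\to 0$ for every weakly null $(x_n)$ in $E$, \emph{not necessarily disjoint}, and every \emph{disjoint} weak$^{\star}$ null $(f_n)$ in $E'$. You have put the disjointness on the wrong sequence. A symptom of this: with your definition the corollary is a tautology once $E'$ is order continuous, and $\sigma$-Dedekind completeness is never used.

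With the correct definition, two of your arguments fail:
\begin{itemize}
\item The direction ``disjoint weak$^{\star}$ DP $\Rightarrow$ weak DP$^{\star}$'' fails as written. Definition \ref{kkkk} says nothing about a weakly null sequence whose terms are not disjoint.
\item Your alternative argument for the converse also fails. There $(f_n)$ is an arbitrary weak$^{\star}$ null sequence, whereas the weak DP$^{\star}$ property only tests disjoint ones.
\end{itemize}
Your main route for the converse does survive. The weak DP$^{\star}$ property trivially makes $Id_E$ almost weak$^{\star}$ Dunford--Pettis, since disjoint weakly null sequences are weakly null. The preceding corollary with $T=Id_E$ and Proposition \ref{pr333} then finish the argument, which is the paper's route. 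Your closing paragraph anticipates that $(f_n)$ may be required to be disjoint, but not that $(x_n)$ is not required to be disjoint, and that is where the work lies.

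To repair the forward direction, you have two options.
\begin{itemize}
\item Pass through $Id_E$ here as well, using that $E$ has the weak DP$^{\star}$ property iff $Id_E$ is almost weak$^{\star}$ Dunford--Pettis. This is what the paper's deduction implicitly relies on, and it holds under the present hypotheses.
\item Show directly that if $E$ is $\sigma$-Dedekind complete with the disjoint weak$^{\star}$ Dunford--Pettis property, then every disjoint weak$^{\star}$ null $(f_n)\subset E'$ is norm null. This gives the weak DP$^{\star}$ property at once.
\end{itemize}
For the direct argument, proceed as follows.
\begin{itemize}
\item $(|f_n|)$ is weak$^{\star}$ null: apply [\cite{article.JJ}, Lemma 2.2] to $f_n^{+}$ and $f_n^{-}$.
\item Suppose $\|f_n\|>\varepsilon$ for all $n$. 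Pick $x_n\in B_E^{+}$ with $|f_n|(x_n)>\varepsilon$. Since $(|f_n|)$ is weak$^{\star}$ null, you can pass to a subsequence with $4^n|f_{n+1}|(\sum_{i\le n}x_i)<1/n$.
\item Put $x=\sum_i 2^{-i}x_i$ and $y_n=(x_{n+1}-4^n\sum_{i\le n}x_i-2^{-n}x)^{+}$. By [\cite{book.1}, Lemma 4.35] the $y_n$ are pairwise disjoint, and $0\le y_n\le x_{n+1}$.
\item Moreover $|f_{n+1}|(y_n)\ge\varepsilon-1/n-2^{-n}|f_{n+1}|(x)\ge\varepsilon/2$ for large $n$.
\end{itemize}
This contradicts the disjoint weak$^{\star}$ Dunford--Pettis property applied to $(y_n)$ and the weak$^{\star}$ null sequence $(|f_{n+1}|)$.
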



\begin{thebibliography}{9}
	\bibitem{book.1}  C.D. Aliprantis, O. Burkinshaw, {\it Positive Operators}, Springer, Berlin (2006).
	\bibitem{article.SDP} H. Carrión P. Galindo M. L. Lourenço {\it A stronger Dunford Pettis property} Studia Mathematica 2008.
	
	\bibitem{article.8} Y. Deng, M. O'Brien, V.G. Troitsky {\it Unbounded norm convergence in Banach lattices} Positivity {\bf21} (2017)
		
		
		
	\bibitem{article.DF} P.G. Dodds and D.H. Fremlin. {\it Compact operators on Banach lattices}. Israel J. Math. 34, 287-320 (1979).
		
	\bibitem{article.6} N. Erkursun-Ozcan, N. Anil Gezer, O. Zabeti, {\it Unbounded Absolutely Weak Dunford-Pettis Operators} Turk J Math, {\bf 43} (2019), 2731-2740.
		
		
	\bibitem{article.FF} A. El Kaddouri, S. Boumnidel, O. Aboutafail and K. Bouras, {\it The class of uaw w$^{*}$Dunford-Pettis Operators} Acta Math. Univ. Comenianae (2023), pp. 5564	
		
	\bibitem{FLO} J. Flores, and Ces\'ar Ruiz, \textit{Domination by positive Banach-Saks operators}, Studia Mathematica 173.2 (2006): 185-192.
	\bibitem{article.JJ}  J.X. Chen, Z.L. Chen, J.X. Guo, {\it Almost limited sets in Banach lattices} Journal of Mathematical Analysis and Applications
	Volume 412, Issue 1, 1 April 2014, Pages 547-553			\bibitem{article.4} M. Kandi\'{c}, M.A.A. Marabeh, V.G.Troitsky, {\it Unbounded norm topology in Banach lattices} J. Math. Anal. Appl. {\bf451} 259–279 (2017)
			
	\bibitem{book.2} P. Meyer-Nieberg, {\it Banach lattices} Universitext, Springer-Verlag, Berlin, 1991.

	\bibitem{article.dim} N.E. Ozcan, N.A. Gezer, O. Zabeti, {\it Dunford-Pettis and compact operators based on unbounded absolute weak convergence.} arXiv:1708.03970v7.


 

	
	

	\bibitem{article.5} M. Ouyang, Z. Chen, J. Chen, Z. Wang {\it$\sigma$-Unbounded Dunford-Pettis operators on Banach lattices}, 1909.07681v2, math.FA, (2019).	
	\bibitem{Ratbi} Abderrahman Retbi,{\it On the class of positive almost weak*
	Dunford-Pettis operators}, Comment. Math. Univ. Carolin., 347–354, 2015

    \bibitem{Thesis-ms} M.A. Taylor,{\it Unbounded convergences in vector lattices}, MSc. Thesis,
2018.
	\bibitem{zan}  A.C. Zaanen,{\it Riesz Spaces II}. North Holland Publishing Companyt, Amsterdam (1983).
	\bibitem{article.7} O. Zabeti {\it Unbounded absolute weak Convergence in Banach Lattices} Positivity, {\bf22(1)} (2018), pp. 501-505.	
\bibitem{article.usp} O. Zabeti, {\it Unbounded continuous operators and unbounded Banach-Saks property in Banach lattices}, Positivity 25, 1989–2001 (2021).	
	\end{thebibliography}
\end{document}